\DeclareMathAlphabet\mathbfcal{OMS}{cmsy}{b}{n}
\DeclareMathAlphabet\mathbfscr{OMS}{cmsy}{b}{n}
\newtheorem{thm}{Theorem}[section]
\newtheorem{rem}{Remark}[section]
\newtheorem{mydef}{Definition}[section]
\newtheorem{lem}{Lemma}[section]
\newtheorem{prop}{Proposition}[section]
\newtheorem{ex}{Example}[section]
\numberwithin{equation}{section}
\def \E{{\rm I\kern-0.16em E}}
\def\P{{\rm I\kern-0.16em P}}
\def\F{{\rm I\kern-0.16em F}}
\def\B{{\rm I\kern-0.16em B}}
\def\C{{\rm I\kern-0.46em C}}
\def\D{{\rm I\kern-0.50em D}}
\newcommand{\Z}{\mathbb{Z}}
\newcommand{\R}{\mathbb{R}}
\newcommand{\N}{\mathbb{N}}
\newcommand{\EXP}{\textbf{Exp}}
\newcommand{\GID}{\text{GID}}
\newcommand{\sgn}{\text{sgn}}
\newcommand{\dd}{\textnormal{d}}
\newcommand{\ee}{\textnormal{e}}
\newcommand{\gr}[1]{{\color{blue} #1}}
\newcommand{\gdr}[1]{{\color{red} #1}}
\newcommand*\bigcdot{\mathpalette\bigcdot@{.5}}
\newcommand*\bigcdot@[2]{\mathbin{\vcenter{\hbox{\scalebox{#2}{$\m@th#1\bullet$}}}}}
\newcommand{\ostar}{\mathbin{\mathpalette\make@circled\star}}
\newcommand{\make@circled}[2]{%
	\ooalign{$\m@th#1\smallbigcirc{#1}$\cr\hidewidth$\m@th#1#2$\hidewidth\cr}%
}
\newcommand{\smallbigcirc}[1]{%
	\vcenter{\hbox{\scalebox{0.77778}{$\m@th#1\bigcirc$}}}%
}
\title{ {\bf Probabilistic Cauchy Functional Equations}}
\author{Ehsan Azmoodeh\footnote{University of Liverpool, Department of Mathematical Sciences, L69 7ZL Liverpool, United Kingdom; E-mail address: ehsan.azmoodeh@liverpool.ac.uk}\,, \,
Noah Beelders\footnote{E-mail address: Noah.Beelders@liverpool.ac.uk}\, and Yuliya Mishura\footnote{Department of Probability, Statistics and Actuarial Mathematics, Taras Shevchenko National University of Kyiv, M\"alardalen University, E-mail address: yuliyamishura@knu.ua}
}
\date{ }
\begin{document}

\maketitle	
	\begin{abstract}
	In this short note, we introduce probabilistic Cauchy functional equations, specifically, functional equations of the following form:
	
	\begin{equation*}
	 f(X_1 + X_2)     \stackrel{\dd}{=}	f(X_1) + f(X_2),
	\end{equation*}
	where  $X_1$ and $X_2$ represent two independent identically distributed real-valued random variables governed by a distribution $\mu$ having appropriate support on the real line. The symbol $\stackrel{d}{=}$ denotes equality in distribution. When $\mu$ follows an exponential distribution, we provide sufficient (regularity) conditions on the function $f$ to ensure that the unique measurable solution to the above equation is solely linear. Furthermore, we present some partial results in the general case, establishing a connection to integrated Cauchy functional equations.
	
	\end{abstract}
	
	\vskip0.3cm
	\noindent {\bf Keywords}: Cauchy functional equation; Integrated Cauchy functional equation; Exponential distribution \\
	\noindent \textbf{MSC 2020}: 39B22; 60E05

	\section{Introduction and motivation }\label{Sec:Main-Theorems}
	
	Functional equations serve as a cornerstone in various mathematical disciplines, by playing a pivotal role in both pure and applied mathematics. Among these, the (additive) Cauchy functional equation
	\begin{equation}\label{eq:CauchyEq}
	f(x+y) = f(x)+f(y), \quad x,y \in \R,
	\end{equation}	
	 stands out as a fundamental and intriguing problem that has garnered significant attention since its inception in the 19th century. In fact, Cauchy in 1818 proved that any \textbf{continuous} solution of  equation \eqref{eq:CauchyEq} is linear; that is, $f(x) =cx$ for some constant $c$. The argument is straightforward: first verify linearity for the case of rationals and then pass to the limit using continuity. In fact, the assumption of continuity can be much further relaxed. Lebesgue \cite{Lebesque} showed that any measurable solution to equation \eqref{eq:CauchyEq} is linear. There is a myriad of interesting literature examining varied conditions on $f$ such that the only solution to equation \eqref{eq:CauchyEq} is of a linear form, e.g., \cite{Banach, Darboux1, Darboux2, Frechet, Sierpinski}.  The curious reader can find a comprehensive treatment of the regularity properties concerning functional equations in J\'arai's book \cite{Jarai}. Another fundamental aspect of the Cauchy functional equation \eqref{eq:CauchyEq} was discovered by Hamel in 1905 \cite{Hamel}. He showed that without any conditions on $f$, the axiom of choice could be used to demonstrate that there exist also nonlinear solutions of \eqref{eq:CauchyEq} for which he also found all such solutions.  The functional equation (1) has been generalized or modified in many other directions. A typical scenario is to change the domain of validity of the equation; for instance, to assume that $f$ satisfies equation \eqref{eq:CauchyEq} only for pairs $(x, y)$ belonging to a subset of $\R^2$. Additional information regarding equation \eqref{eq:CauchyEq}, along with numerous supplementary references, can be explored in works such as those by Acz\'el \cite{Aczel1} and Acz\'el-Dhombres \cite{Aczel2}. Moreover, the applications of equation \eqref{eq:CauchyEq} in diverse scientific domains are also extensively covered.\\
	
	  In this paper, we are interested in a probabilistic version of equation \eqref{eq:CauchyEq}. For the ease of presentation, we only consider the one dimensional case.  Let $\mu$ be an absolutely continuous probability measure (w.r.t.~the Lebesgue measure) on the real line. The instances of discrete distributions are excluded since the Cauchy additive functional equation on discrete spaces such as $\Z_+$ is solved trivially. Throughout our presentation, we shall denote by $S(\mu)$ the support of $\mu$ and additionally assume that $S(\mu) = \R, \, \R_+$ or $\R_-$ otherwise it is clearly stated. Also,  the probability space $(\Omega, \mathcal{F},\P)$ is fixed and all the random objects are assumed to be defined on it.
	 \begin{mydef}[Probabilistic Cauchy functional equation]
	 Fix an integer $n\ge 2$. Let $\mu$ be an absolutely continuous probability measure on the real line. We say that a function $f: S(\mu) \to \R$ satisfies the $n$-summands probabilistic Cauchy functional equation with respect to $\mu$ whenever
	 \begin{equation}\label{eq:PCE}
	 	f(X_1+\dots+X_n)  \stackrel{\dd}{=}      f(X_1) + \dots+ f(X_n),
	 	\end{equation}
 	where  $X_1, \dots, X_n$ are $n$ independent identically distributed real-valued random variables whose laws equal to $\mu$. We simply say that $f$ satisfies  the probabilistic Cauchy equation with respect to $\mu$ whenever equation \eqref{eq:PCE} holds with $n=2$.
	 \end{mydef}

	In contrast to the deterministic Cauchy functional equation, adding more summands in the probabilistic Cauchy functional equation changes its distributional properties, and hence we let
	
	 \begin{equation}
	 \begin{aligned}
	 	\mathbfcal{PCFE} (\mu;n) & := \Big \{  f:S(\mu)\to \R \text{ satisfying equation }  \eqref{eq:PCE}         \Big \}, \text{ and } \\
	 	\mathbfcal{PCFE} (\mu;2) &= \mathbfcal{PCFE} (\mu).
	 	\end{aligned}	
 	\end{equation}
	
 Here, we have to remove a few ambiguities. First, any solution $f$ to equation \eqref{eq:PCE} remains a valid solution under any suitable modification on every null set (w.r.t Lebesgue measure).  Hence, any solution $f$ to equation \eqref{eq:PCE} induces an equivalent solution class  $[f]$ (under obvious equivalence relation $f \sim g$ iff $f(x) = g(x)$ a.e. Lebesgue measure).Hence,
    the continuity assumption would be a natural condition that could be imposed on the solution $f$ to combine all equivalent solutions into a single solution. Second, when the support $S(\mu)$ is not the entire real line, any solution $f \in \mathbfcal{PCFE}(\mu;n)$ can have its domain extended to the whole real line in many different ways with no harm. In such cases, we are solely interested in the structure of the solutions of the probabilistic Cauchy equation on the support of the underlying probability measure, and hence specify the domain of solution $f \in \mathbfcal{PCFE}(\mu;n)$ as $S(\mu)$. A typical instance is when $\mu \sim \EXP (\lambda=1)$ is an exponential distribution whose support is $\R_+$ (we will examine it in more details later on).   Now, we ask ourselves the following natural question sharing the same spirit of the classical Cauchy function equation \eqref{eq:CauchyEq}:

 \begin{center}
 \textit{What regularity conditions dictate that any solution $f \in \mathbfcal{PCFE} (\mu;n)$ takes the linear form $f(x) = cx$ (for some constant $c$) almost everywhere or everywhere?}
 \end{center}

	In this note, we consider the above question with a particular emphasis on when $\mu$ is an exponential distribution. We address in Section \ref{sec:ConnectionPhaseType}, one of the main reasons for choosing the exponential distribution is that the above central question arose naturally during the study of a useful property in the class of (fractional) phase-type distributions. In Section \ref{sec:ExpCase}
     we present several sufficient regularity conditions on $f$ that guarantee the linear form of the unique solution to the probabilistic Cauchy equation  \eqref{eq:PCE}.   Section \ref{sec:GeneralCase} compiles a few partial results for a general probability measure $\mu$ with connections to integrated Cauchy functional equations.

	\subsection{Almost sure Cauchy functional equation}
	There exist other stochastic versions of the Cauchy functional equation \eqref{eq:CauchyEq} proposed in the existing literature. For example, in \cite{Mania}, the authors  studied solutions of the stochastic Cauchy functional equation
	
	\begin{equation}\label{eq:PCFE-as}
	 f(X_1 + X_2) = f(X_1)+f(X_2), \quad a.s.,
	 \end{equation}	
	where $X_1, X_2 \sim \mathcal{N}(0,1)$ are two independent standard Gaussian random variables. Note that, clearly, equation \eqref{eq:PCFE-as} is much stronger than our probabilistic Cauchy functional equation \eqref{eq:PCE} when $\mu = \mathcal{N}(0,1)$. However, their primary goal differs from ours in that they connect the almost sure Cauchy functional equation \eqref{eq:PCFE-as} to a martingale problem. In fact, they proved the following interesting observation. Let $W = (W_t)$ be a Brownian motion w.r.t.~its natural filtration. Then, the stochastic process $M_t= f(W_t)$ is a martingale where $f$ is a solution to equation \eqref{eq:PCFE-as} iff $f(x) =cx$ almost everywhere w.r.t.~Lebesgue measure. Additionally, if one imposes the extra regularity assumption of "right-continuity" on the martingale $M$, then the "almost everywhere" condition in the former statement can be removed. Their study was inspired by the work of Smirnov \cite{Smirnov} who showed that a weak version of Bernstein’s characterization of the Gaussian distribution implies the local integrability of a measurable solution of the Cauchy functional equation \eqref{eq:CauchyEq}.

	\subsection{Connection with  the inhomogeneous phase-type distributions}\label{sec:ConnectionPhaseType}
	Let's consider the parameterised exponential family of distributions; that is
	\begin{equation}\label{eq:EXP-family}
		\mathbfcal{E}:= \{ F(\theta; x): = 1 - e^{-\theta x} \,: \, \theta >0, x \in (0,\infty)  \}.
	\end{equation}
	It is a classical result (or can be seen by direct computation) that the exponential family $\mathbfcal{E}$ enjoys the following interesting property: for $n \in \N$ and every $\theta \in (0,\infty)$,
	\begin{equation}\label{eq:EXP-property}
		\underbrace{F(\theta;\cdot) \star \cdots \star  F(\theta;\cdot)}_{n \text{ times }}= F^{\star n} ( \theta;\cdot)   =  \frac{(-\theta)^n }{(n-1)!}  \,    \frac{\partial^{n-1}}{\partial \theta^{n-1}} G (\theta;\cdot),
	\end{equation}
	where $G(\theta;\cdot):=-\theta^{-1} F(\theta;\cdot)$ and $\star$ stands for the convolution operator. The beauty of relation \eqref{eq:EXP-property} is that it reduces the derivation of the distribution of the sum of $n$ independent exponential random variables to simple differentiation w.r.t.~the parameter variable. Now, let $X(\theta) \sim F(\theta;\cdot)$ be a random variable corresponding to the distribution function $F(\theta;\cdot)$ for each $\theta \in \Theta$. In addition, let $h:\R \rightarrow \R$ be a bijective function, and consider the new random variable
	\begin{equation}\label{eq:h-family}
		Y(\theta) := h(X(\theta)) \sim F_h(\theta; \cdot) := F\big(\theta; h^{-1}(\cdot)\big), \quad \forall \, \theta \in \Theta.
	\end{equation}

	Denote by $ \mathbfcal{E}_h := \{F_h(\theta; \cdot)= F(\theta;h^{-1}(\cdot)) \, :\,  \theta \in \Theta \}$   the parameterised family of probability distributions corresponding to random variables $Y(\theta)$, $\theta \in \Theta$. Then, in light of relation \eqref{eq:h-family}, family $\mathbfscr{E}_h$ naturally inherits property \eqref{eq:EXP-property} from the exponential family $\mathbfscr{E}$. More precisely, for every natural number $n$, we define the $h$-convolution operator
	\begin{equation}\label{eq:star-convolution}
		Y(\theta)^{\ostar n} := h\left( X_1(\theta) + \dots + X_n(\theta)\right), \quad \theta \in \Theta,
	\end{equation}
	where the random variables $X_i(\theta) \sim F(\theta;\cdot)$ are independent. We also adapt the conventional notation $F_h(\theta;\cdot)^{\ostar n}$ for the distribution of the random variable $Y(\theta)^{\ostar n}$. Now, it is straightforward to use property \eqref{eq:EXP-property} to find distributional properties of $Y(\theta)^{\ostar n}$. In this vein, note that, for $n \ge 2$,
	\begin{align}\label{eq:hProperty}
		F_h(\theta;x)^{\ostar n} &:= \P \left(   Y(\theta)^{\ostar n} \le x \right) = \P\big(h(X_1(\theta)+ \dots + X_n(\theta)) \leq x\big) = F\big(\theta; h^{-1}(x)\big) ^{\star n} \nonumber\\
		& = \frac{(-\theta)^n}{(n-1)!}  \frac{\partial^{n-1}}{\partial \theta^{n-1}} G(\theta; h^{-1}(x)).
	\end{align}
 It is mentioned that the distributions of the form $Y= h (X_1(\theta) + \dots + X_n (\theta) )$ bears an intimate relation to the inhomogeneous phase-type matrix distributions presented in \cite{ALB2019}.
Hence, it is meaningful, due to the validity of property \eqref{eq:hProperty}, to ask which functions $h$ permit that
	\begin{equation}
		h\left( X_1(\theta) + \dots + X_n(\theta)\right) \stackrel{\dd}{=} h(X_1(\theta)) + \dots + h(X_n(\theta)).
	\end{equation}

	\section{Main results} \label{Sec:Theorems}
	\subsection{$ \mathbfcal{PCFE} (\mu)$:  case of the exponential distribution}\label{sec:ExpCase}

	\begin{thm}\label{Thm:Main}
		Let the following assumptions be fulfilled.
		\begin{itemize}
			\item[$(i)$] The independent random variables $X_1, X_2 \sim \mu$, where $\mu$ is an exponential distribution with parameter $\lambda >0$.
			\item[$(ii)$] The real-valued function $f$ has the following properties:  $f \in C^1[0,+\infty), f(0)=0$, $f(+\infty)=+\infty$, $f^{\prime}(0)>0$ and $f$  is strictly increasing.
			\item[$(iii)$] $f \in \mathbfcal{PCFE}(\mu)$, i.e.,  the following random variables are identically distributed:
			\begin{equation}\label{equ1}
				f\left(X_1+X_2\right) \stackrel{\scriptsize\dd}{=} f\left(X_1\right)+f\left(X_2\right).
			\end{equation}
		\end{itemize}
		Then $f(x)=c x$ for some $c \geqslant 0$.
	\end{thm}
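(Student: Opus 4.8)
The plan is to turn the distributional identity \eqref{equ1} into a pointwise identity between probability densities, and then to recognise the resulting relation as a Volterra-type equation that is rigid enough to force linearity. A preliminary scaling reduces everything to $\lambda=1$: if $\tilde f(t):=f(t/\lambda)$ and $\tilde X_i:=\lambda X_i\sim\EXP(1)$, then $f(X_1+X_2)=\tilde f(\tilde X_1+\tilde X_2)$ and $f(X_i)=\tilde f(\tilde X_i)$, so \eqref{equ1} for $f$ at rate $\lambda$ is equivalent to the same equation for $\tilde f$ at rate $1$; moreover $\tilde f$ inherits all the properties in $(ii)$ and $f$ is linear iff $\tilde f$ is. Hence I assume $\lambda=1$ and write $g:=f^{-1}$, which by $(ii)$ is a $C^1$ strictly increasing bijection of $[0,\infty)$ with $g(0)=0$ and $g'(0)=1/f'(0)>0$.

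Next I would compute the two densities explicitly. Put $Y_i:=f(X_i)$. Since $f$ is an increasing bijection, $Y:=f(X)$ has survival function $\bar F(y)=\P(X>g(y))=e^{-g(y)}$ and continuous density $p(y)=g'(y)e^{-g(y)}$ with $p(0)=g'(0)>0$; in particular $g=-\log\bar F$ is precisely the cumulative hazard function $\Lambda$ of $Y$. The right-hand side $f(X_1)+f(X_2)=Y_1+Y_2$ then has density $p\star p$. For the left-hand side, $S:=X_1+X_2$ has the Gamma density $se^{-s}$, so $f(S)$ has density $\frac{d}{dy}\P(S\le g(y))=g(y)\,g'(y)\,e^{-g(y)}=\Lambda(y)\,p(y)$. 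As both densities are continuous, \eqref{equ1} is equivalent to
\begin{equation*}
\Lambda(y)\,p(y)=(p\star p)(y),\qquad y\ge 0.
\end{equation*}
(Equivalently, in Laplace transforms, $\psi(s)^2=\widehat{\Lambda p}(s)$ with $\psi(s)=\E[e^{-sY}]$, but the density form is the one I would exploit.)

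The heart of the matter is to show that this identity admits, up to the free constant $a:=g'(0)$, only the exponential solution. One verifies directly that the density $p_a(y)=ae^{-ay}$ of $\EXP(a)$ solves it, with $\Lambda(y)=ay$, i.e.\ with $g$ linear and $f(x)=x/a=f'(0)\,x$. For uniqueness, note that $(p\star p)(y)=\int_0^y p(u)p(y-u)\,du$ and $\Lambda(y)=-\log\!\big(1-\int_0^y p\big)$ depend only on $p|_{[0,y)}$, so the identity, rewritten as $p(y)=(p\star p)(y)/\Lambda(y)$ for $y>0$, is a Volterra equation determining $p$ from its past, with $p(0)=a$ pinning the solution. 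I expect the main obstacle to sit exactly here: the equation is quadratic and its kernel is singular at the origin, where numerator and denominator both vanish (like $a^2y$ and $ay$), so any Picard/Gronwall scheme must be desingularised near $y=0$. I would resolve this either by invoking the standard uniqueness theorem for second-kind Volterra equations on each compact interval after this desingularisation, or, using that $p\star p$ is one degree smoother than $p$ to bootstrap $g$ to $C^\infty$, by matching Taylor coefficients at $0$: the order-$y^2$ comparison already forces $g''(0)=0$, and the induction continues to annihilate every higher derivative. Either route yields $p\equiv p_a$, hence $\Lambda$ (and $g$) linear, and therefore $f(x)=cx$ with $c=f'(0)>0$.
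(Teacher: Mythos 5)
Your reduction is correct as far as it goes: writing $g=f^{-1}$, the survival function of $f(X)$ is $\ee^{-g(y)}$, the density of $f(X_1+X_2)$ is $g(y)g'(y)\ee^{-g(y)}$, and equality in distribution of two laws with continuous densities is equality of densities, so \eqref{equ1} becomes $\Lambda(y)p(y)=(p\star p)(y)$ with $p=g'\ee^{-g}$, $\Lambda=g$. This is in fact the same identity the paper derives (its \eqref{equ3}--\eqref{equ4}, written there in terms of $f$ rather than $g$), and your scaling to $\lambda=1$ and the verification that $p_a(y)=a\ee^{-ay}$ solves it are fine. The genuine gap is the uniqueness step, which is where the entire content of the theorem sits, and neither of your two routes closes it. For the Volterra route: after dividing by $\Lambda(y)\sim ay$, the kernel has a true $1/y$ singularity at the origin, so no ``standard uniqueness theorem for second-kind Volterra equations'' applies. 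Concretely, for two solutions with $p_1(0)=p_2(0)=a$ and $\delta=p_1-p_2$, the natural estimate near $0$ (using $\Lambda_i(y)\sim ay$ and $(p_i\star p_i)(y)\sim a^2y$) only gives
\begin{equation*}
|\delta(y)|\ \le\ \frac{K}{y}\int_0^y |\delta(u)|\,\dd u
\end{equation*}
with a constant $K$ that works out to be about $3$, and for $K>1$ this inequality does \emph{not} force $\delta\equiv 0$: the continuous function $\delta(y)=y^{K-1}$, vanishing at $0$, satisfies it with equality. So the ``desingularisation'' you defer is not a technicality; it is exactly the missing proof.

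The Taylor route has two independent defects. First, the claim that ``$p\star p$ is one degree smoother than $p$'' is false for merely continuous $p$ (self-convolution of a H\"older-$\alpha$ function with $\alpha<1/2$, e.g.\ of Weierstrass type, can be nowhere differentiable), and under hypothesis $(ii)$ you only know $p\in C^0$; moreover both sides of the identity vanish to first order at $y=0$, so the equation degenerates precisely where you need to gain regularity, and $g(0)=0$ kills the inverse-function-theorem bootstrap there. Second, even granting $g\in C^{\infty}$ at $0$, matching Taylor coefficients shows only that any solution agrees with $p_a$ to infinite order at one point; for non-analytic functions this does not give equality. (These defects can in principle be repaired jointly, since flatness of $\delta$ at $0$ combined with the singular Gronwall inequality above does propagate $\delta\equiv 0$ first near $0$ and then globally, but that assembly, together with the unproved regularity at the origin, \emph{is} the proof, and it is absent.) By contrast, the paper resolves uniqueness with a different and purely $C^1$ idea: it differentiates the integral identity to get \eqref{Thm1:DerivativePsi_Quantity}, whose integrand is a positive factor times $f'(y)-f'(y-v)$; at any point $y^*$ where $f'$ attains its running maximum (or minimum) over $[0,y^*]$ the integrand has a fixed sign, so the vanishing of the integral forces $f'$ to be constant on $[0,y^*]$, and a short case analysis on the running extrema of $f'$ yields $f'\equiv c$ everywhere. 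That sign/extremum argument is the idea your proposal is missing.
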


	\begin{rem}\label{rem1}  \begin{itemize}
			\item[$(a)$] Assuming that $f \in C^1[0,+\infty)$, we consider the right-hand  derivative at zero, as usual.
			
			\item[$(b)$] If we assume statements $(i)$ and $(iii)$ from the above theorem as well as that $f$ is strictly increasing and $f \in C^1[0,+\infty)$, then automatically $f(0)=0$ and $  f(+\infty)=+\infty$ otherwise \eqref{equ1} is obviously violated. Indeed, if,  for example, $f(+\infty)=f_\infty\in\mathbb{R}$, then $\P\left(f\left(X_1\right)+f\left(X_2\right)>f_\infty \right)>0$ whilst $\P\left(f\left(X_1+ X_2\right)>f_\infty \right)=0$.
			\item[$(c)$] We can consider symmetric assumption:  $f \in C^1[0,+\infty), f(0)=0$, $f(+\infty)=-\infty$, and $f$ is strictly decreasing. Then we shall get that   $f(x)=c x$ for some $c \leqslant 0$, for which the proof will be the same, but with obvious symmetric changes.
		\end{itemize}
	\end{rem}
	\begin{proof}
		Note that \eqref{equ1} is equivalent to   condition
		$\P(A_x)=\P(B_x)$ for any $x \in \mathbb{R}$, where
		$$
		A_x=\left\{f\left(X_1+X_2\right) \geqslant x\right\}, \quad  B_x=\left\{f\left(X_1\right)+f\left(X_2\right) \geqslant x\right\} \text {. }
		$$
		
		Consider these events separately. Since $f(0)=0,  f(+\infty)=+\infty$ and $f$ is strictly increasing, there exists the inverse function $f^{-1}$ that is also strictly increasing on  $\mathbb{R}^+=[0,  +\infty)$,  $f^{-1}(0)=0$, $f^{-1}(+\infty)=+\infty$.   Then for any $x>0$
		$$
		\begin{aligned}
			A_x =& \; \left\{X_1+X_2 \geqslant f^{-1}(x)\right\} \notag \\
			=& \; \left\{X_2 \geqslant f^{-1}(x)\right\} \cup \left\{0<X_2<f^{-1}(x), X_1 \geqslant f^{-1}(x)-X_2\right\} \text {, }
		\end{aligned}
		$$
		and
		$$
		\begin{aligned}
			B_x =& \; \left\{f\left(X_1\right) \geqslant x-f\left(X_2\right)\right\} \notag \\
			=& \; \left\{X_2 \geqslant f^{-1}(x)\right\} \cup \left\{ X_2<f^{-1}(x), X_1 \geqslant f^{-1}\left(x-f\left(X_2\right)\right)\right\} .
		\end{aligned}
		$$
		Therefore,
		
		$\P\left(A_x\right)=\P\left(B_x\right)$ if and only if
		\begin{equation}\label{Thm1:Eq1}
			\P\left\{0<X_2<f^{-1}(x), X_1 \geqslant f^{-1}(x)-X_2\right\}
			=\P\{0<X_2<f^{-1}(x), X_1 \geqslant f^{-1}(x-f\left(X_2\right))\} .
		\end{equation}
		In terms of exponential densities, Eq.~\eqref{Thm1:Eq1} can be rewritten in the following equivalent form
		\begin{equation}\label{equ3}
			\int_0^{f^{-1}(x)} \lambda e^{-\lambda u} \int_{f^{-1}(x)-u}^{\infty} \lambda e^{-\lambda v} \dd v \dd u=\int_0^{f^{-1}(x)} \lambda e^{-\lambda u} \int_{f^{-1}(x-f(u))}^{\infty} \lambda e^{-\lambda v} \dd v \dd u,
		\end{equation}
		for any $x > 0$. In turn,   it follows from   equation \eqref{equ3} that
		\begin{equation*}
			\int_0^{f^{-1}(x)} \textnormal{exp} \{-\lambda u-\lambda (f^{-1}(x)-u)\} \dd u =\int_0^{f^{-1}(x)} \textnormal{exp} \{-\lambda u-\lambda f^{-1}(x-f(u)) \} \dd u ,
		\end{equation*}
		and hence that
		\begin{equation}
			f^{-1}(x) e^{-\lambda f^{-1}(x) }=\int_0^{f^{-1}(x)} \textnormal{exp} \{-\lambda u-\lambda f^{-1}(x-f(u)) \} \dd u, \label{Thm1:Eq2}
		\end{equation}
		for any $x>0$, because  $$ -\lambda u-\lambda (f^{-1}(x)-u)=-\lambda  f^{-1}(x)$$ and does not depend on $u$.
		
		\medskip
		
		Now, since $f^{-1}(x)$ can be an arbitrary number $y\ge 0$,  we can rewrite Eq.~\eqref{Thm1:Eq2} as
		\begin{equation*}
			e^{-\lambda y} \cdot y=\int_0^y \exp \{-\lambda u-\lambda f^{-1}(f(y)-f(u))\}  \dd u,  
		\end{equation*}
		for any $y \geq 0$. Furthermore, by implementing the change of variables $y-u=v$ under the sign of the right-hand side integral, the latter can be rewritten as
		\begin{equation*}
			e^{-\lambda y} \cdot y  = \int_0^y \exp \left\{-\lambda(y-v)-\lambda f^{-1}(f(y)-f(y-v))\right\} \dd v,
		\end{equation*}
		or, equivalently as
		\begin{equation}\label{equ4}
			y = \int_0^y \exp \left\{\lambda v-\lambda f^{-1}(f(y)-f(y-v))\right\} \dd v.
		\end{equation}
		As a consequence of the  equation \eqref{equ4}, we can state that
		\begin{equation}
			\psi(y):=\int_0^y\left(\exp \left\{\lambda v-f^{-1}(f(y)-f(y-v))\right\}-1\right) \dd v=0, \quad y\ge 0, \label{Thm1:psi_quantity}
		\end{equation}
		which will be our function of interest.
		
		\medskip
		
		Since we assume that $f \in C^1[0, +\infty)$ and since $\psi(0)=0$, Eq.~\eqref{Thm1:psi_quantity} is equivalent to the following equation: $\psi^{\prime}(y)=0$. Now, we take explicitly the derivative of the right-hand side of Eq.~\eqref{Thm1:psi_quantity} and take into account that
		$$
		\exp \left\{\lambda y-\lambda f^{-1}(f(y)-f(y-y))\right\}-1 =\exp \left\{\lambda y-\lambda f^{-1}(f(y))\right\}-1=0 .
		$$
		
		Therefore, by recalling for continuously differentiable functions that
		$$\partial_y \Bigl(\int_0^y \varphi(y, v) \dd v \Bigr) = \int_0^y \partial_y \varphi(y, v) \dd v + \varphi(y, y),$$
		and recalling also the formula for the differentiation of the inverse function, we get that
		\begin{equation}
			\psi^{\prime}(y)=\int_0^y \exp \{\theta(y, v)\} \times  \frac{f^{\prime}(y)-f^{\prime}(y-v)}{f^{\prime}(f^{-1}(f(y)-f(y-v)))} \dd v=0 , \label{Thm1:DerivativePsi_Quantity}
		\end{equation}
		where
		$$
		\theta(y, v)=\lambda v-\lambda f^{-1}(f(y)-f(y-v)) .
		$$
		
		Note that $f^{\prime}(x)>0$ for any $x>0$ because the function $f$ is strictly increasing, and $f^{\prime}(x)>0$ by assumption of the theorem. Therefore $$f^{\prime}\left(f^{-1}(f(y)-f(y-v))\right)>0,$$
		for any $v\in[0,y]$, and all components under the sign of integral are continuous implying that the integral is correctly defined.

		\medskip
		
		Our goal now is to prove that the numerator $f^{\prime}(y)-f^{\prime}(y-v)$  is a zero function.  Thus, consider the behavior of $f^{\prime}(y)$, for $y>0$. Of course, there can only be the two following possibilities.
		
		\begin{enumerate}
			\item[$(i)$]  There exists a strictly increasing sequence of points $y_n^{u p},\,n\ge 1$ such that $y_n^{u p} \uparrow \infty$ as $n \rightarrow \infty$, and    $$f^{\prime}\left(y_n^{u p}\right) \ge f^{\prime}(y) \quad \text{for all} \quad y \leq y_n^{u p}, \quad \text{and} \quad f^{\prime}\left(y_{n+1}^{u p}\right) > f^{\prime}\left(y_n^{u p}\right),\, n\ge 1.
			$$
			Since by Eq.~\eqref{Thm1:DerivativePsi_Quantity}
			$$\exp \{\theta(y, v)\}>0 \quad \text{and} \quad f^{\prime}(f^{-1}(f(y)-f(y-v)))>0,
			$$
			we conclude that $\psi^{\prime}\left(y_n^{u p}\right)$ can be zero only if $f^{\prime}(y_n^{u p})=f^{\prime}(y_n^{u p}-v)$ for all $v \in[0, y_n^{u p}]$; i.e. $f^{\prime}(y)=c_n$ for all $y \leq y_n^{u p}$ and some $c_n>0$. Changing $n$ for $n+1$, we conclude that $c_n=c_{n+1}, $ and so $f^{\prime}(y)=c_n=c_1$ for all $y \leq y_{n+1}^{u p}$.
			
			Finally, since $y_n^{up} \uparrow \infty$ as $n \rightarrow \infty$, we get  that  $c_n=c$ for some $c>0$, and $f^{\prime}(y)=c$ for all $y \geq 0$. Therefore, $f(y)=c y$ for $y \geq 0$.
			
		Note that  the case of symmetric downwards conditions (see Remark \ref{rem1}, $(c)$) with $y_n^{down} \uparrow \infty$, where $ f^{\prime}\left(y_n^{down}\right) \leq
			f^{\prime}(y)$ for all $y \leq y_n^{down}$ and $f^{\prime}\left(y_{n+1}^{down}\right) < f^{\prime}\left(y_n^{down}\right)$, can be considered similarly.
			
			\item[$(ii)$] There exists two points $y_0^{up}$ and $y^{down}_0$ such that $f^{\prime}\left(y_0^{up }\right) \geq f^{\prime}(y)$ for
			all $y \geq 0$ and $f^{\prime} ({y}_0^{down} ) \leq f^{\prime}(y)$ for all $y \geq 0$.
			
			Let $y_0^{up}<{y}_0^{down}$ (the case ${y}_0^{up}={y}_0^{{down}}$ is trivial, and the case ${y}_0^{u p}>{y}_0^{down}$ can be considered similarly). Then we can
			"forget" about $y_0^{up}$, consider the whole interval $[0,{y}_0^{down}] $ and  deduce exactly as in   $(i)$, case $(b)$,  that $f^{\prime}(y)=c$ for all $0 \leq y \leq  {y}_0^{down}$.  However, it means immediately that
			$$f^{\prime}({y}_0^{up})=f^{\prime}({y}_0^{down}),$$ and therefore $f^{\prime}(y)=c$ for all $y\ge 0.$
		\end{enumerate}
		
	\end{proof}

	\begin{rem}
		If it is assumed that $f \in C^2[0,\infty)$, then the assumption $f^{\prime}(0)>0$ can be avoided. Indeed, suppose for some $\alpha \in(0,1)$ that $\lim _{x \rightarrow 0} \frac{f^{\prime}(x)}{x^\alpha}=c_\alpha \in (0,\infty)$. Then by l'Hospital's rule
		$$
		\begin{aligned}
			\lim _{x \rightarrow y} \frac{f^{\prime}(y)-f^{\prime}(x)}{f^{\prime}\left(f^{-1}(f(y)-f(x))\right)} =& \; c_\alpha^{-1}\lim _{x \rightarrow y} \frac{f^{\prime}(y)-f^{\prime}(x)}{\bigl[f^{-1}(f(y)-f(x))\bigr]^\alpha}\\
			=& \;  (\alpha c_\alpha)^{-1} \lim _{x \rightarrow y} \frac{-f^{\prime \prime}(x)\bigl[f^{-1}(f(y)-f(x))\bigr]^{1-\alpha}}{-\left[f^{\prime}\left(f^{-1}(f(y)-f(x))\right)\right]^{-1} f^{\prime}(x)}  \quad   \\
			=& \; (\alpha c_\alpha)^{-1}\frac{f^{\prime \prime}(y)}{f^{\prime}(y)} \lim _{x \rightarrow y} \bigl[f^{-1}(f(y)-f(x))\bigr]^{1-\alpha} f^{\prime}(f^{-1}(f(y)-f(x))) \\
			=& \; 0,
		\end{aligned}
		$$
		for any $y>0$,  and where integral \eqref{Thm1:DerivativePsi_Quantity} is correctly defined.
		
	\end{rem}
	
	We also observe that the assumption of independence of $X_1, X_2 \sim \textnormal{Exp}(\lambda)$ for $\lambda>0$ yields for any suitable (e.g.~bounded) $g(x_1,x_2): \R^2 \rightarrow \R$ that
	$$
	\E g(X_1,X_2) = \int_{\R^2} g(x_1,x_2) \lambda^2 \ee^{-\lambda(x_1 + x_2)} \dd x_1 \dd x_2.
	$$
	Using this, we have the following.

	\begin{prop}
		Let $f:\R_+ \rightarrow \R_+ \in C[0,\infty)$ s.t.~$f(x + y) \geq f(x) + f(y)$ for all $x,y \geq 0$. Assume further that $f \in \mathbfcal{PCFE}(\mu)$ where $\mu$ is an exponential distribution with parameter $\lambda>0$. Then $f(x) = cx$ for some $c > 0$.
	\end{prop}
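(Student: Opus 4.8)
The plan is to exploit the tension between the pointwise superadditivity inequality and the distributional identity furnished by membership in $\mathbfcal{PCFE}(\mu)$. Set $Y := f(X_1 + X_2)$ and $Z := f(X_1) + f(X_2)$. Superadditivity forces $Y \geq Z$ almost surely, while the hypothesis $f \in \mathbfcal{PCFE}(\mu)$ gives $Y \eqdist Z$. The crux of the argument is the elementary but decisive observation that a pair of random variables satisfying $Y \geq Z$ almost surely together with $Y \eqdist Z$ must in fact coincide, i.e.\ $Y = Z$ almost surely. Granting this, the superadditivity inequality is saturated everywhere on the support of $(X_1,X_2)$, which is exactly the statement that $f$ solves the Cauchy equation there.

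To justify this observation without any integrability assumption on $f$, I would argue at the level of tails. For every $t \in \R$ one has $\{Z > t\} \subseteq \{Y > t\}$ because $Y \geq Z$, hence $\P(Z > t) = \P(Y > t,\, Z > t)$; combined with $\P(Y > t) = \P(Z > t)$ this yields $\P(Y > t,\, Z \leq t) = 0$. Writing $\{Y > Z\} = \bigcup_{t \in \Q} \{Y > t \geq Z\}$ and taking a countable union then gives $\P(Y > Z) = 0$, so that $Y = Z$ almost surely. (If $f$ happens to be $\mu$-integrable, the same conclusion follows even faster from $\E[Y - Z] = 0$ together with $Y - Z \geq 0$.)

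Next I would transfer this almost sure identity into a genuine functional equation. Since $(X_1, X_2)$ has the strictly positive density $\lambda^2 \ee^{-\lambda(x_1 + x_2)}$ on $\R_+^2$, the event $\{Y \neq Z\}$ being $\P$-null means precisely that the Lebesgue measure of $\{(x_1,x_2) \in \R_+^2 : f(x_1 + x_2) \neq f(x_1) + f(x_2)\}$ is zero. The function $g(x_1, x_2) := f(x_1 + x_2) - f(x_1) - f(x_2)$ is continuous on $\R_+^2$ and vanishes Lebesgue-almost everywhere; a continuous function that is zero almost everywhere is identically zero, so $f(x_1 + x_2) = f(x_1) + f(x_2)$ for all $x_1, x_2 \geq 0$. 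Thus $f$ is a continuous solution of the classical Cauchy equation \eqref{eq:CauchyEq} on $\R_+$, whence $f(x) = cx$ by Cauchy's theorem; since $f$ takes values in $\R_+$ we have $c \geq 0$, and $c > 0$ unless $f$ is identically zero.

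I expect the main subtlety to lie in the first step, namely extracting almost sure equality from equality in law plus a one-sided inequality, and in executing it cleanly in the absence of moment assumptions, which is why the tail-based argument is preferable to one routed through expectations. The remaining steps, the almost-everywhere-to-everywhere upgrade via continuity and the positivity of the exponential density, and the final appeal to the classical theorem, are then routine.
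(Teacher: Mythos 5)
Your proof is correct, and it reaches the conclusion by a route that differs from the paper's in its key step. Where you isolate and prove the general probabilistic lemma that $Y \geq Z$ almost surely together with $Y \eqdist Z$ forces $Y = Z$ almost surely (via the tail identity $\P(Y>t,\,Z\leq t)=0$ and a countable union over rational levels), the paper instead applies the bounded, strictly decreasing test function $t \mapsto e^{-t}$: equality in law gives $\E\, e^{-f(X_1+X_2)} = \E\, e^{-f(X_1)-f(X_2)}$, while superadditivity gives the pointwise inequality $e^{-f(x+y)-\lambda x-\lambda y} \leq e^{-f(x)-f(y)-\lambda x-\lambda y}$, and a nonnegative function with zero integral vanishes a.e., so the integrands must agree. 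The two mechanisms serve the same purpose: the paper's choice of $e^{-t}$ exploits $f \geq 0$ to get boundedness and hence finite expectations without moment assumptions, which is precisely the difficulty your tail argument is designed to sidestep; your lemma is slightly more general in that it never uses nonnegativity of $f$, only the one-sided inequality and equality in law, and it is a reusable fact about stochastic ordering. Both proofs finish identically: the a.e. identity is upgraded to an everywhere identity using continuity of $(x,y) \mapsto f(x+y)-f(x)-f(y)$ and strict positivity of the exponential density (a step the paper leaves largely implicit and you spell out), followed by the classical Cauchy theorem for continuous functions. One further point in your favor: you correctly note that $f \equiv 0$ satisfies every hypothesis, so the honest conclusion is $c \geq 0$; the paper's claim of $c>0$ glosses over this degenerate case.
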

	
	\begin{proof}
		Let $X_1, X_2 \sim \textnormal{Exp}(\lambda)$ be two independent random variables. Consider
		$$
		\E \ee^{-f(X_1+X_2)} = \int_{\R^2} \ee^{-f(x+y)} \lambda^2 \ee^{-\lambda(x + y)} \dd x \, \dd y,
		$$
		and
		$$
		\E \ee^{-f(X_1) -f(X_2)} = \int_{\R^2} \ee^{-f(x) - f(y)} \lambda^2 \ee^{-\lambda(x + y)} \dd x \, \dd y,
		$$
		Then $\E \ee^{-f(X_1+X_2)}  =  \E \ee^{-f(X_1) -f(X_2)}$ by the assumption that $f(X_1 + X_2) \stackrel{\scriptsize \dd}{=} f(X_1) + f(X_2)$, but
		$$
		\int_{\R^2} \ee^{-f(x+y)-\lambda x - \lambda y} \dd x \, \dd y \leq \int_{\R^2} \ee^{-f(x) - f(y)-\lambda x - \lambda y} \dd x \, \dd y,
		$$
		because the $\ee^{-f(x+y)-\lambda x - \lambda y} \leq \ee^{-f(x) - f(y)-\lambda x - \lambda y}$ for any $(x,y) \in \R^2$.  Both statements can only simultaneously be true if $f(x+ y) = f(x) + f(y)$, for all $x, y$. Consequently, by the continuity of $f$, we have that $f(x) = cx$ for some $c >0$.
	\end{proof}
	
	\begin{rem}
		The case where $f:\R_+ \rightarrow (-\infty,0]$ and $f(x + y) \leq f(x) + f(y)$ can be considered similarly.
	\end{rem}

We conclude this section by presenting two results regarding general probability measures, leveraging the assumption of sub(super)additivity within the solution function $f$.

		\begin{prop}
			Let $f \in \mathbfscr{PCFE}(\mu)$ be a continuous solution to the probabilistic Cauchy functional equation \eqref{eq:PCE}. Assume further that $f:\R_+ \rightarrow \R_+$ is strictly increasing, $f(0) = 0$, $f(+\infty) = + \infty$ and  that $f(x+y) \le (\ge ) f(x) +f(y)$ for all $x ,y$. Then $f(x) =cx$ for some constant $c$.
		\end{prop}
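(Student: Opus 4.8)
The plan is to adapt the argument of the preceding proposition, replacing the explicit exponential density by integration against the product measure $\mu\times\mu$ and adding a support argument to promote an almost-everywhere identity to an everywhere one. Since $f\ge 0$ on $\R_+$, the bounded strictly monotone test function $t\mapsto \ee^{-t}$ is integrable against every distribution appearing below, so I would work with it throughout; it conveniently settles the subadditive and superadditive cases simultaneously.

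First I would exploit $f\in\mathbfscr{PCFE}(\mu)$, i.e. $f(X_1+X_2)\stackrel{\dd}{=}f(X_1)+f(X_2)$, to equate the expectations of $\ee^{-(\cdot)}$ of the two sides, which by independence of $X_1,X_2\sim\mu$ reads
\begin{equation*}
\int_{\R_+^2}\ee^{-f(x+y)}\,\mu(\dd x)\,\mu(\dd y)
=\E\,\ee^{-f(X_1+X_2)}
=\E\,\ee^{-f(X_1)-f(X_2)}
=\int_{\R_+^2}\ee^{-f(x)-f(y)}\,\mu(\dd x)\,\mu(\dd y).
\end{equation*}
Now invoke (super/sub)additivity pointwise: if $f(x+y)\ge f(x)+f(y)$ then $\ee^{-f(x+y)}\le \ee^{-f(x)-f(y)}$ for every $(x,y)$, and if $f(x+y)\le f(x)+f(y)$ the reverse pointwise inequality holds. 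In either case the two integrands are ordered pointwise while their integrals against $\mu\times\mu$ coincide; since $t\mapsto\ee^{-t}$ is injective, this forces $f(x+y)=f(x)+f(y)$ for $\mu\times\mu$-almost every $(x,y)\in\R_+^2$.

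The decisive step is to upgrade this to an identity valid everywhere. I would introduce the discrepancy set $E:=\{(x,y)\in\R_+^2:\ f(x+y)\ne f(x)+f(y)\}$. Because $f$ is continuous, the map $(x,y)\mapsto f(x+y)-f(x)-f(y)$ is continuous, so $E$ is open in $\R_+^2$, and the previous step gives $(\mu\times\mu)(E)=0$. On the other hand $S(\mu)=\R_+$ yields $\supp(\mu\times\mu)=\R_+^2$, so every nonempty open subset of $\R_+^2$ carries strictly positive $\mu\times\mu$-measure; equivalently, any open $\mu\times\mu$-null set lies in the empty complement of the support. Hence $E=\emptyset$, i.e. $f(x+y)=f(x)+f(y)$ for all $x,y\ge 0$. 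A continuous solution of the additive Cauchy equation \eqref{eq:CauchyEq} on $\R_+$ is linear, so $f(x)=cx$, with $c>0$ forced by $f$ being strictly increasing with $f(+\infty)=+\infty$.

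I expect the only genuine obstacle to be this almost-everywhere-to-everywhere passage: in the exponential case it was invisible, because the density is strictly positive on all of $\R_+^2$ and continuity then propagates the pointwise identity directly, whereas for a general $\mu$ one must combine the openness of $E$ (from continuity) with the full-support hypothesis to exclude any failure of additivity. A minor routine point, worth only a line, is the finiteness of all the expectations, which is automatic since $0\le \ee^{-f(\cdot)}\le 1$.
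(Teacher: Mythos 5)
Your proof is correct, but it takes a genuinely different route from the paper's own proof of this proposition. The paper exploits the full distributional identity: exactly as in the proof of Theorem~\ref{Thm:Main}, it equates the tail probabilities $\P\left(f(X_1+X_2)\ge x\right)=\P\left(f(X_1)+f(X_2)\ge x\right)$ for \emph{every} $x$, uses the inverse $f^{-1}$ (this is where strict monotonicity, $f(0)=0$ and $f(+\infty)=+\infty$ are consumed) to convert this into the integral identity $\int_0^{y}p(u)\int_{y-u}^{f^{-1}(f(y)-f(u))}p(v)\,\dd v\,\dd u=0$ for every $y\ge 0$, and then notes that super(sub)additivity gives the inner integral a fixed sign, forcing $f(y-u)=f(y)-f(u)$ for all $0\le u\le y$. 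You instead extract a single scalar consequence of the distributional identity --- the expectation of $\ee^{-t}$ --- which is the device used in the paper's \emph{preceding} proposition for the exponential law, and then you do explicitly the work that the paper leaves implicit: the ordered-integrands argument only yields additivity $\mu\times\mu$-almost everywhere, and you upgrade it to additivity everywhere via openness of the discrepancy set (continuity of $f$) together with $\supp(\mu\times\mu)=\R_+^2$, finishing with the classical continuous Cauchy equation. Your route buys economy of hypotheses (the core argument never uses $f^{-1}$; strict monotonicity and the boundary conditions only serve to pin down $c>0$, and $f(0)=0$ is anyway forced by additivity) and it makes rigorous the a.e.-to-everywhere passage, which the paper's proof also needs --- its concluding ``can hold only for the case when'' step tacitly requires positivity of the density or a support argument of precisely the kind you supply. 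What the paper's route buys in exchange is that it stays entirely within the CDF framework already set up for Theorem~\ref{Thm:Main}, producing the restricted additive equation $f(y-u)=f(y)-f(u)$ directly for each fixed $y$ without any appeal to properties of product-measure supports.
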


		\begin{proof}
			Let $p$ denote the density of $\mu$. In this case, we observe that Eq.~\eqref{equ3} takes the form
			\begin{equation}
				\int_0^{f^{-1}(x)} p(u) \int_{f^{-1}(x)-u}^{\infty} p(v) \dd v \dd u=\int_0^{f^{-1}(x)} p(u) \int_{f^{-1}(x-f(u))}^{\infty} p(v) \dd v \dd u. \notag
			\end{equation}
			Denoting $f^{-1}(x) = y \geq 0$, we get from the above equation that
			\begin{equation}
				\int_0^{y} p(u) \int_{y-u}^{\infty} p(v) \dd v \dd u=\int_0^{y} p(u) \int_{f^{-1}(f(y)-f(u))}^{\infty} p(v) \dd v \dd u, \notag
			\end{equation}
			or equivalently that
			\begin{equation} \label{eq:FundamentalEqforProofofProp}
				\int_0^{y} p(u) \int^{f^{-1}(f(y)-f(u))}_{y-u} p(v) \dd v \dd u = 0.
			\end{equation}
			Assuming that $f(x+y) \geq f(x) + f(y)$ for all $x,y \geq 0$, we observe that $f(y-u) \leq f(y) - f(u)$ for all $0 \leq u \leq y$, whence $y - u \leq f^{-1}\bigl(f(y) - f(u) \bigr)$ for all $0 \leq u \leq y$. Therefore, Eq.~\eqref{eq:FundamentalEqforProofofProp} can hold only for the case when $f(y -u) = f(y) - f(u)$ for all $0 \leq u \leq y$.
		\end{proof}

\begin{prop}\label{prop:sublinear-generalcase}
Let $n \ge 2$ be an integer and $f \in \mathbfscr{PCFE}(\mu;n)$ be a continuous solution to the probabilistic Cauchy functional equation \eqref{eq:PCE} such that $\E_\mu \vert f \vert < \infty$. Assume further that $f(x+y)
\ge (\le) f(x) +f(y)$ for all $x ,y$. Then $f(x) =cx$ for some constant $c$.
\end{prop}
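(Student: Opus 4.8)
The plan is to run the expectation version of the Laplace-transform argument used in the preceding proposition: there the bounded test function $e^{-(\cdot)}$ was used because no moment assumption was available, whereas here the hypothesis $\E_\mu|f|<\infty$ lets me work directly with the identity, which is exactly what makes the sub/superadditivity sign survive under taking means. First I would observe that pairwise superadditivity $f(x+y)\ge f(x)+f(y)$ iterates, so that $f(x_1+\dots+x_n)\ge \sum_{i=1}^n f(x_i)$ for every $(x_1,\dots,x_n)\in S(\mu)^n$; this is well defined because $S(\mu)\in\{\R,\R_+,\R_-\}$ is closed under addition. Next, since $\E_\mu|f|<\infty$ the variable $f(X_1)+\dots+f(X_n)$ is integrable with mean $n\,\E_\mu f$, and because $f\in\mathbfscr{PCFE}(\mu;n)$ gives $f(X_1+\dots+X_n)\eqdist f(X_1)+\dots+f(X_n)$, the left-hand variable is integrable as well and shares the same mean.

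The crux is then immediate: the random variable $f(X_1+\dots+X_n)-\bigl(f(X_1)+\dots+f(X_n)\bigr)$ is nonnegative by the iterated superadditivity yet has zero expectation, so it vanishes almost surely. Equivalently, $f(x_1+\dots+x_n)=\sum_{i=1}^n f(x_i)$ for $\mu^{\otimes n}$-almost every $(x_1,\dots,x_n)$. To upgrade this to a genuine identity on all of $S(\mu)^n$, I would note that the solution set is closed, being the zero set of the continuous map $(x_1,\dots,x_n)\mapsto f(x_1+\dots+x_n)-\sum_i f(x_i)$, and has full $\mu^{\otimes n}$-measure; since every nonempty open box in $S(\mu)^n$ has positive $\mu^{\otimes n}$-mass (each coordinate factor is an interval of positive $\mu$-measure, $S(\mu)$ being the full support), the solution set is dense, and a closed dense set is everything.

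Once the identity $f(x_1+\dots+x_n)=\sum_i f(x_i)$ holds on all of $S(\mu)^n$, evaluating at the origin gives $f(0)=n f(0)$, hence $f(0)=0$ (here $0\in S(\mu)$ in each of the three cases). Setting $x_3=\dots=x_n=0$ then collapses the $n$-fold relation to the ordinary Cauchy equation $f(x+y)=f(x)+f(y)$ on $S(\mu)$, and continuity of $f$ forces $f(x)=cx$. The subadditive alternative $f(x+y)\le f(x)+f(y)$ is handled verbatim after reversing the inequality, the nonnegative variable being replaced by its negative. I expect the only delicate point to be the passage from the $\mu^{\otimes n}$-a.e.\ identity to the everywhere identity; everything else is a one-line consequence of ``nonnegative mean-zero variables vanish'' together with the integrability furnished by $\E_\mu|f|<\infty$.
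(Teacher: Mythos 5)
Your proof is correct and follows essentially the same route as the paper's: equating expectations (justified by integrability plus equality in distribution), noting the integrand has a fixed sign by iterated super/subadditivity so that it must vanish $\mu^{\otimes n}$-a.e., upgrading to an everywhere identity via continuity and full support, and then reducing to the classical continuous Cauchy equation after observing $f(0)=0$. Your treatment is in fact slightly more explicit than the paper's at the a.e.-to-everywhere step and at the collapse to two variables, but the argument is the same.
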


\begin{proof}
Let $p$ denote the density of $\mu$. Then, the validity of equation \eqref{eq:PCE}, the integrability assumption and a direct conditioning argument yield that
\begin{equation}\label{eq:star}
\int_{S(\mu)^n}  \big\{       f(x_1+\dots+x_n) - \left(  f(x_1) +\dots+ f(x_n) \right)        \big  \}	 p(x_1) \dots p(x_n) \dd x_1 \dots \dd x_n=0.
\end{equation}
Now, the assumption of super(sub)-linearity together with \eqref{eq:star} implies that the function
\begin{equation}\label{eq:starstar}
g:S(\mu)^n  \to \R; \, (x_1,\dots,x_n) \mapsto f(x_1+\dots+x_n)- \left( f(x_1) + \dots +f(x_n) \right)
\end{equation}	
is zero almost everywhere for all $(x_1,\dots,x_n) \in S(\mu)^n (=\R^n, \R^n_{+}, \text{ or } \R^n_{-} \text{ by our assumption})$.  But, note that $g$ is a jointly continuous function, and, hence, it is identically zero everywhere. Finally, using the continuity of $f$ along with the simple fact that $f(0)=0$, we immediately conclude that $f(x+y) = f(x) + f(y)$ for all $(x,y)$, and thus the claim follows at once by employing the classic Cauchy functional equation.
\end{proof}

\subsection{$\mathbfcal{PCFE}(\mu)$: a partial result in the general case and its connection with integrated Cauchy functional equations}\label{sec:GeneralCase}
In this section, we provide a partial result in the general case beyond the exponential distribution via tools available in the realm of integrated Cauchy functional equations (ICFEs). In order to briefly motivate the latter,  let $X \sim \mu$ be a random variable distributed as $\mu$ being an absolutely continuous probability measure  on the real line with density $p$. Under the integrability assumption $\E \vert f(X) \vert < \infty$, one can easily see that the probabilistic Cauchy functional equation \eqref{eq:PCE} (equating the first moment of both sides with $n=2)$ yields that

\begin{equation}\label{eq:FirstMomentImplication}
	\int_{S(\mu)} \int_{S(\mu)} \big\{      f(x+y) - \left(   f(x) + f(y) \right)    \big \} p(x) p(y) \dd x \dd y =0.
\end{equation}	
Note that, obviously, any integrable solution $f \in \mathbfcal{PCFE}(\mu)$ is a solution to integral equation \eqref{eq:FirstMomentImplication}, however the inverse implication is far from being true.  For example, consider $\mu = \mathcal{N}(0,1)$, a standard Gaussian distribution. Then, one can easily check that  function $f(x)=x^3$ is a solution to equation \eqref{eq:FirstMomentImplication} but not a solution to the probabilistic Cauchy functional equation \eqref{eq:PCE}.

Now observe that a sufficient requirement for the validity of the integral equation \eqref{eq:FirstMomentImplication} is that, for almost all $x$,
\begin{equation}\label{eq:TowardsFirstMomentSolution1}
	f(x) + \E f(X)  = \int_{S(\mu)}  f(x+y) \mu(\dd y).
\end{equation}	
The latter requirement \eqref{eq:TowardsFirstMomentSolution1} may be viewed as an integrated version of the additive Cauchy functional equation $f(x+y)=f(x)+f(y)$ that appears as a variant of the  so-called \textit{Lau-Rao theorem} (see Section 2 of \cite{Rao1}). The origin of the Lau-Rao theorem can be traced back to the celebrated Choquet-Deny convolution equation $\mu = \mu \ast \sigma$ (see \cite{Choquest-Deny} for details). For the sake of completeness, below, we present a version taken from \cite[Section 4]{Rao1}, slightly adapted to fit our framework.

\begin{thm}[\textbf{The additive Lau-Rao Theorem}]\label{thm:Lau-RaoTheorem}
Let $\mu$ be a non-arithmetic $\sigma$-finite measure on $\R_+$ such that $\mu(\{ 0\}) <1$. Consider the following equation for almost all $x \in \R_+$ (w.r.t Lebesgue measure):
\begin{equation}\label{eq:Lau-RaoEquation}
   \int_{\R_+}  f(x+y) \mu(\dd y) = 	f(x) + \E f(X),
\end{equation}	
where $f:\R_+ \to \R$ is a locally integrable Borel function.  Assume that $f$ is either an increasing or decreasing function almost everywhere. Then, every  solution to the equation \eqref{eq:Lau-RaoEquation} takes the form

\begin{equation}\label{eq:Lau-RauSolutionForm}
f(x) = \begin{cases}
	a + b \left(  1 - e^{-\eta x}\right) & \mbox{ if } \, \eta \neq 0\\
	cx +d  &\mbox{ if } \, \eta =0
\end{cases}	
\end{equation}	
where $a,b,c,d$ are constants and $\eta$ is such that $\int_{\R_+} e^{-\eta x} \mu (\dd x) =1$.  Furthermore, assume that all the assumptions above prevail apart from that instead of $\R_+$, we consider the whole real line $\R$. Then, every solution to the equation \eqref{eq:Lau-RaoEquation} takes the form $\lambda_1 f_1 + \lambda_2 f_2$ where the non-negative constants $\lambda_1$ and $\lambda_2$ are so that $\lambda_1 + \lambda_2 =1$ and, $f_1$ and $f_2$ are of the form \eqref{eq:Lau-RauSolutionForm} with $\eta$ replaced respectively with $\eta_1$ and $\eta_2$ satisfying in equation $\int_{\R} e^{-\eta_i x} \mu (\dd x) =1$, $i=1,2$.
\end{thm}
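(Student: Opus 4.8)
The plan is to strip the inhomogeneity from \eqref{eq:Lau-RaoEquation} by differencing, and then to lean on Choquet--Deny theory. Fixing $h>0$ and writing \eqref{eq:Lau-RaoEquation} at both $x$ and $x+h$, the subtraction cancels the constant $\E f(X)$ and shows that the increment $g_h(x):=f(x+h)-f(x)$ solves the \emph{homogeneous} integrated Cauchy equation
$$ g_h(x)=\int_{\R_+} g_h(x+y)\,\mu(\dd y)\qquad\text{for a.e. } x. $$
The decisive input is monotonicity: if $f$ is, say, increasing a.e., then $g_h\ge 0$, so $g_h$ is a nonnegative, locally integrable solution of a Deny-type convolution equation. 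This places us squarely in the setting where the representation theorem underlying the Lau--Rao result (descending from the Choquet--Deny equation \cite{Choquest-Deny}, cf.~\cite[Section 4]{Rao1}) applies.

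Next I would invoke that representation: every nonnegative, locally integrable solution of the homogeneous equation is an exponential mixture $g_h(x)=\int_{\Gamma} e^{-\eta x}\,\kappa_h(\dd\eta)$ carried by the root set $\Gamma:=\{\eta\,:\,\int_{\R_+}e^{-\eta y}\mu(\dd y)=1\}$, the non-arithmeticity of $\mu$ being exactly what excludes oscillatory (periodic) solutions and forces pure exponentials. On $\R_+$ the Laplace transform $\phi(\eta):=\int_{\R_+}e^{-\eta y}\mu(\dd y)$ is convex and, since $\mu(\{0\})<1$ puts mass off the origin, strictly decreasing; hence $\phi(\eta)=1$ has a unique root and $\Gamma=\{\eta\}$ is a singleton. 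Consequently $g_h(x)=A(h)\,e^{-\eta x}$ for some scalar $A(h)\ge 0$, and evaluating at $x=0$ (on the continuous representative) gives $A(h)=f(h)-f(0)$, so that
$$ f(x+h)-f(x)=\bigl(f(h)-f(0)\bigr)\,e^{-\eta x},\qquad x,h\ge 0. $$

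From this increment identity the explicit forms drop out. When $\eta\ne 0$, the relation is a Pexider-type equation whose monotone solutions are precisely $f(x)=a+b(1-e^{-\eta x})$; when $\eta=0$ it collapses to $f(x+h)-f(x)=f(h)-f(0)$, i.e.\ additive Cauchy for $f(\cdot)-f(0)$, whose monotone (hence measurable) solutions are $f(x)=cx+d$. This yields \eqref{eq:Lau-RauSolutionForm}. For the version on the whole line I would rerun the differencing step, the only change being that $\phi(\eta)=\int_{\R}e^{-\eta y}\mu(\dd y)$ is now convex with possibly two points where it equals $1$, say $\eta_1,\eta_2$; then $\Gamma=\{\eta_1,\eta_2\}$ and $g_h$ is a nonnegative combination of $e^{-\eta_1 x}$ and $e^{-\eta_2 x}$, and unwinding the increments produces the convex combination $\lambda_1 f_1+\lambda_2 f_2$ asserted in the theorem.

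The hard part is the representation step itself: establishing (or correctly importing from \cite{Choquest-Deny,Rao1}) that the nonnegative solutions of the homogeneous integrated Cauchy equation are exactly exponential mixtures over $\Gamma$. This is the analytic core, and is where the full strength of Choquet--Deny, together with non-arithmeticity to kill oscillatory solutions, is indispensable. Two subordinate technicalities also require attention: passing from the almost-everywhere identities to genuine pointwise relations, for which the monotonicity of $f$ (confining its discontinuities to a countable set and guaranteeing local integrability) suffices; and verifying that the scalars $A(h)$ are mutually consistent across $h$, so that the family of increment identities closes up into a single functional equation determining $f$.
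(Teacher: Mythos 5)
Note first that the paper does not prove this theorem at all: it is quoted, with attribution, from \cite[Section 4]{Rao1} (with its lineage traced to the Choquet--Deny equation \cite{Choquest-Deny}), so there is no internal argument to compare yours against; your proposal can only be measured against the cited literature, and by that measure it holds up. The differencing step is correct: for fixed $h>0$, subtracting \eqref{eq:Lau-RaoEquation} at $x$ from the same identity at $x+h$ cancels the constant $\E f(X)$ and shows that $g_h(x)=f(x+h)-f(x)$ solves the homogeneous equation $g_h(x)=\int g_h(x+y)\,\mu(\dd y)$ for a.e.\ $x$, and monotonicity gives $g_h$ a fixed sign. This is precisely the reduction by which such additive variants are derived from the core ICFE theorem in the Rao--Shanbhag framework, so your route is consistent with, not divergent from, the source the paper leans on. Your root-set analysis is also right: on $\R_+$ the transform $\phi(\eta)=\int e^{-\eta y}\mu(\dd y)$ is strictly decreasing once $\mu$ charges $(0,\infty)$ (otherwise the equation forces $f$ to be a.e.\ constant, which is of the stated form), so $\Gamma$ is a singleton and $g_h(x)=A(h)e^{-\eta x}$ a.e.; on $\R$, strict convexity of $\phi$ caps $\Gamma$ at two roots, and linear independence of the two exponentials splits the problem into one equation per root.

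Two refinements would tighten the finish. First, avoid the appeal to a ``continuous representative'' at $x=0$: a monotone $f$ need not have one, and you do not need it. Splitting $f(x+h+k)-f(x)$ at $x+h$ gives the cocycle identity $A(h+k)=A(h)+e^{-\eta h}A(k)$, whose symmetrization $A(h)\bigl(1-e^{-\eta k}\bigr)=A(k)\bigl(1-e^{-\eta h}\bigr)$ immediately yields $A(h)=b\bigl(1-e^{-\eta h}\bigr)$ when $\eta\neq 0$, and additivity of $A$ (hence linearity, by monotonicity) when $\eta=0$; this simultaneously disposes of the consistency-of-$A(h)$ issue you flag, and the conclusion \eqref{eq:Lau-RauSolutionForm} should then be read a.e., which is all one can expect since the hypothesis is only an a.e.\ identity. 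Second, be explicit that the entire analytic weight rests on the imported representation theorem (Deny on $\R$, Lau--Rao on $\R_+$) for nonnegative locally integrable solutions of the homogeneous equation: your argument is a reduction to that result, not a proof of it. That is not a defect relative to the paper --- the paper cites the very same core without proof --- but it is the honest boundary of what your argument establishes.
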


\begin{rem} \label{rem:LaplaceTransformEtas}
	In our case, we have that $\mu$ is a probability distribution with a support $S(\mu) = \R_+,\R_-$ or $\R$.
	For $S(\mu) = \R_+, \R_-$, we have that $\int_{S(\mu)} \ee^{-\eta x} \mu(\dd x) = 1$ occurs only for $\eta = 0$. However, for $S(\mu) = \R$, we have that $L_\mu (\eta) := \int_\R \ee^{-\eta x} \mu(\dd x) = 1$ for $\eta = 0$ and possibly one other $\eta \neq 0$.
	
	Indeed, let $\mathcal{D}_\eta := \{ \eta \in \R : L_\mu(\eta) < \infty\}$. First note that $\mathcal{D}_\eta$ may be a singleton; for instance, the Cauchy distribution has $\mathcal{D}_\eta = \{0\}$. If, however, some $\eta_u > 0$ is in $\mathcal{D}_\eta$, then since we have for $s \in [0,\eta_u]$ that $\ee^{-(\eta_u - s) x}  \leq \ee^{-\eta_u x} $ on $x \in \R_-$ and $\ee^{-s x} \leq 1 $ on $x \in \R_+$, it must be that $L_\mu (\eta)$ exists at least on the interval $[0,\eta_u]$. A similar reasoning shows that if $\eta_d < 0$ is in $\mathcal{D}_\eta$, then so is the interval $[\eta_d,0]$. Therefore, $\mathcal{D}_\eta$ must itself be some interval (not necessarily including the endpoints).
	
	Furthermore, since the convexity of $\ee^x$ yields that $\ee^{\lambda x_1 + (1-\lambda)x_2} \leq \lambda \ee^{x_1} + (1-\lambda) \ee^{x_2}$ for $x_1,x_2 \in \mathcal{D}_\eta$, $\lambda \in [0,1]$, taking expectations of this inequality yields that $L_\mu (\eta)$ is convex. In turn, since $L_\mu (\eta) = 1$ occurs for $\eta = 0$, it must be that $L_\mu (\eta) = 1$ occurs only for one other $\eta \neq 0$ at most. Indeed, assuming existence of two nonzero points, we get from convexity of $L_\mu$ the existence of the whole interval $  [0,\eta_u)$ where $\E \ee^{-\eta X} = 1$ for all $\eta \in [0,\eta_u)$. Then we can choose $2 \eta \in [0,\eta_u)$ such that $\E \ee^{-2\eta X} = 1$. Therefore, $\text{Var}(\ee^{-\eta X}) = \E \ee^{-2\eta X} - \bigl( \E \ee^{-\eta X} \bigr)^2 = 0$ showing that $X = 0$ a.s.~A simple example of existence of two points where $L_{\mu}(\eta) =1$  is the distribution $\mu \sim \mathcal{N}(\gamma, \sigma^2)$ with mean $\gamma \neq 0$, variance $\sigma^2$, having $L_{\mu}(\eta) = \exp\{ - \gamma \eta + \sigma^2\eta^2/2 \}$.
\end{rem}


ICFEs have played a significant role in characterisation problems appearing in non-contemporary probability theory and mathematical statistics. For details, the reader can consult textbook \cite{FunctionalEquation-Book}.  The next proposition offers a partial result to our central question by employing this approach.

\begin{thm}\label{prop:GeneralCaseICFE-1}
Fix an integer $n \ge 2$. Let $\mu$ be a non-degenerate probability measure with support $S(\mu)$ having finite second moment. Let $f: S(\mu) (=\R_+, \R_-, \text{or } \R) \to \R \in L^2 (\mu)$ be a Borel measurable function so that with $X \sim \mu$ it holds that for almost all $x$
\begin{equation*}\label{eq:TechnicalCondition}
\hspace{4cm} \E \left(  f(x+X) -f(X)  \right) \ge (\le) f(x).  \hspace{5cm}{ \mathbf{(H)}}
\end{equation*}	
Assume that $f \in \mathbfcal{PCFE}(\mu;n)$ be either an increasing or decreasing function almost everywhere.  Then, $f(x)=cx$ for some constant $c$.
\end{thm}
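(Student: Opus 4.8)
The plan is to use the hypothesis $\mathbf{(H)}$ together with the first-moment consequence of the probabilistic Cauchy equation to force $\mathbf{(H)}$ to hold with equality, thereby reducing the problem to the additive Lau--Rao Theorem \ref{thm:Lau-RaoTheorem}, and then to eliminate the non-linear solutions by matching variances across \eqref{eq:PCE}. I treat the case of $\mathbf{(H)}$ with ``$\ge$''; the ``$\le$'' case follows by applying the argument to $-f$. Write $S_k = X_1 + \cdots + X_k$ for i.i.d.\ $X_i \sim \mu$, and set
$$ g(x) := \E f(x + X) - f(x) - \E f(X) = \int_{S(\mu)} f(x+y)\,\mu(\dd y) - f(x) - \E f(X), $$
so that $\mathbf{(H)}$ reads $g \ge 0$ for almost every $x$. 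Since $f \in L^2(\mu) \subset L^1(\mu)$ and $\mu$ has finite second moment, all expectations below are finite; in particular, equality in distribution in \eqref{eq:PCE} yields $\E f(S_n) = n\,\E f(X)$.

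First I would run a telescoping argument. Conditioning on $S_{k-1}$ and using the independence of $X_k$,
$$ \E f(S_k) - \E f(S_{k-1}) - \E f(X) = \E\big[ g(S_{k-1})\big] \ge 0, $$
because $S_{k-1}$ is absolutely continuous and $g \ge 0$ Lebesgue-almost everywhere. Summing over $k = 2, \dots, n$ and using $\E f(S_1) = \E f(X)$ gives $\E f(S_n) - n\,\E f(X) = \sum_{k=2}^n \E[g(S_{k-1})]$. The left-hand side vanishes by the first-moment identity, so each non-negative summand is zero; in particular $\E[g(X)] = \int_{S(\mu)} g(x)\,p(x)\,\dd x = 0$, where $p$ is the density of $\mu$. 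As $g \ge 0$ a.e., this forces $g = 0$ almost everywhere on $\{p > 0\}$. Granting that $p > 0$ Lebesgue-a.e.\ on $S(\mu)$ (otherwise one invokes the same conclusion for the convolution densities of $S_2, \dots, S_{n-1}$, whose positivity sets exhaust $S(\mu)$), we obtain
$$ \int_{S(\mu)} f(x+y)\,\mu(\dd y) = f(x) + \E f(X) \quad \text{for a.e. } x, $$
which is precisely the Lau--Rao equation \eqref{eq:Lau-RaoEquation}.

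Since $f$ is monotone and locally integrable, Theorem \ref{thm:Lau-RaoTheorem} applies. When $S(\mu) = \R_+$ or $\R_-$, Remark \ref{rem:LaplaceTransformEtas} leaves only $\eta = 0$, so $f(x) = cx + d$; substituting into \eqref{eq:PCE} gives $cS_n + d \stackrel{\dd}{=} cS_n + nd$, whence $d = 0$ and $f(x) = cx$. When $S(\mu) = \R$, the solution is an affine combination of a linear piece and at most one exponential piece $x \mapsto a + b(1 - e^{-\eta^* x})$, where $\eta^* \ne 0$ is the unique nonzero root of $L_\mu(\eta) = 1$; collecting terms, $f(x) = \alpha x + \beta - \gamma e^{-\eta^* x}$. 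I would then match moments across \eqref{eq:PCE}. Writing $U_i = e^{-\eta^* X_i}$, one has $\E U_i = 1$ and, since $f \in L^2(\mu)$, $\kappa := \E U_i^2 = L_\mu(2\eta^*) \in (1,\infty)$ (with $\kappa > 1$ by non-degeneracy). The first moment forces $\beta = \gamma$, and computing the variance of both sides of \eqref{eq:PCE} — the common $\alpha S_n$ term and the cross-covariance $\mathrm{Cov}(S_n, e^{-\eta^* S_n})$ contributions cancel identically — reduces the equality of variances to $\kappa^n - 1 = n(\kappa - 1)$, i.e.\ $1 + \kappa + \cdots + \kappa^{n-1} = n$. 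Since $\kappa > 1$, the left side strictly exceeds $n$, a contradiction; hence $\gamma = 0$, $f$ is affine, and the constant is removed as before, giving $f(x) = cx$.

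The main obstacle is the central reduction: converting the one-sided hypothesis $\mathbf{(H)}$ into the exact Lau--Rao identity. This hinges on pairing the sign information $g \ge 0$ with the exact first-moment identity $\E f(S_n) = n\,\E f(X)$ coming from \eqref{eq:PCE}, so that a non-negative sum of conditional averages is forced to vanish term by term. Two further points require care: the passage from ``$g = 0$ on $\{p > 0\}$'' to ``$g = 0$ a.e.\ on $S(\mu)$'', which is transparent when the density is a.e.\ positive on its support and otherwise needs the positivity sets of the iterated convolutions; and the variance computation in the case $S(\mu) = \R$, where one must verify that the linear and exponential contributions decouple so that the obstruction collapses to $\kappa^n - 1 = n(\kappa-1)$.
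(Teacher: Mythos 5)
Your proposal is correct and follows essentially the same route as the paper's proof: it combines hypothesis $\mathbf{(H)}$ with the first-moment identity $\E f(X_1+\cdots+X_n) = n\,\E f(X)$ coming from \eqref{eq:PCE} to force the Lau--Rao identity $\int_{S(\mu)} f(x+y)\,\mu(\dd y) = f(x)+\E f(X)$, then invokes Theorem \ref{thm:Lau-RaoTheorem} and, in the case $S(\mu)=\R$, eliminates the exponential component by matching second moments, arriving at exactly the paper's equation $\kappa^n - 1 = n(\kappa - 1)$ with $\kappa = L_\mu(2\eta^*) > 1$ by non-degeneracy, a contradiction. The only differences are cosmetic: your telescoping identity $\E f(S_n) - n\,\E f(X) = \sum_{k=2}^n \E\bigl[g(S_{k-1})\bigr]$ with nonnegative summands is a tidier bookkeeping of the paper's two-pass ascent/descent conditioning argument, and your explicit flag of the measure-theoretic point (the identity is obtained only Lebesgue-a.e.\ on $\{p>0\}$ rather than a.e.\ on $S(\mu)$) addresses a subtlety the paper itself passes over silently.
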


\begin{proof}[Proof of Theorem \ref{prop:GeneralCaseICFE-1}]
	First, let $X_1, \dots, X_{n-1}$ be $n$ mutually independent random variables having distribution $\mu$. Then, using the tower property and by repetitively using assumption \textbf{(H)}, we obtain that
	\begin{align}
		\E & \bigl(f(x+X_1+\cdots + X_{n-1})\bigr) \notag
		= \E\bigl( \E\bigl( f(x+X_1+\cdots + X_{n-1}) \big| X_1, \dots, X_{n-2} \bigr)\bigr) \notag \\
		&\geq (\leq) \E\Big( \E\bigl( f(x+X_1+\cdots + X_{n-2}) \big| X_1, \dots, X_{n-3} \bigr) + f(X_{n-1})\Big) \notag \\
		&\;\;\;\;\; \vdots \notag \\
		&\geq (\leq) \E\Big( \E\bigl( f(x+X_1 \bigr) + f(X_2) + \cdots + f(X_{n-1})\Big) \notag \\
		&\geq (\leq) \E\bigl( f(x) + f(X_1) + \cdots + f(X_{n-1})\bigr) \notag.
	\end{align}
	The latter is equivalently expressed as
	\begin{equation}\label{eq:Assumption H(n)}
		\int_{S(\mu)^{n-1}} \Big(    f(x+x_1+\dots+x_{n-1})  - \left( f(x)+f(x_1)+\dots +f(x_{n-1})  \right)    \Big) \mu(\dd x_1) \dots \mu (\dd x_{n-1}) \ge (\le ) 0.
	\end{equation}
	Now, assume that $f \in \mathbfcal{PCFE}(\mu;n)$ such that $f \in L^2(\mu)$. By equating the expectations of both sides of equation \eqref{eq:PCE}, we obtain
	\begin{equation}
		\int_{S(\mu)^{n}} \Big(    f(x+x_1+\dots+x_{n-1})  - \left( f(x)+f(x_1)+\dots +f(x_{n-1})  \right)    \Big) \mu(\dd x_1) \dots \mu (\dd x_{n-1})  \mu (\dd x) = 0, \notag
	\end{equation}
	and so the above statement combined with Eq.~\eqref{eq:Assumption H(n)} yields for almost all $x$ that
	\begin{equation}\label{eq:1}
		f(x) + (n-1) \E f(X) = \int_{S(\mu)^{n-1}} f(x+x_1+\dots+x_{n-1}) \mu(\dd x_1) \dots \mu (\dd x_{n-1}).
	\end{equation}
	Now, for $n \geq 3$, observe from Eq.~\eqref{eq:1} and assumption \textbf{(H)} that
	\begin{align*}
		f(x) + (n-2) \E f(X)
		&= \E\bigl( f(x+X_1+\cdots +X_{n-2} + X_{n-1}) - f(X_{n-1}) \bigr) \notag \\
		&= \E\Big( \E \bigl( f(x+X_1+\cdots +X_{n-2} + X_{n-1}) - f(X_{n-1}) \big| X_1,\dots X_{n-2} \bigr) \Big) \notag \\
		& \geq (\leq) \E\bigl( f(x+X_1+\cdots +X_{n-2}) \bigr),
	\end{align*}
	and so repeating the above procedure yields that $f(x) + \E f(X) \geq (\leq) \E\bigl( f(x+X) \bigr)$ where $X \sim \mu$.  Hence,	combining the latter inequalities with assumption \textbf{(H)}, we obtain that for almost all $x$,
	\begin{equation}
		f(x) + \E f(X) = \int_{S(\mu)} f(x+y) \mu(\dd y). \label{eq:2}
	\end{equation}
	Now, we consider two cases separately.\\
	 \textit{Case $S(\mu) =\R_+$}: First we note that equation $L_\mu (\eta ): = \int_{\R_+} e^{- \eta x} \mu (\dd x)=1$ has only trivial solution $\eta =0$.
Now, since it is assumed that $f$ is monotone, a direct application of the Lau-Rao Theorem  \ref{thm:Lau-RaoTheorem} yields that all the solutions to equation \eqref{eq:2} are given by $f(x)= cx +d$. Referring to the probabilistic Cauchy functional equation, we deduce that $d=0$. The case $S(\mu)=\R_-$ follows similarly. \\

\textit{Case $S(\mu) =\R$}: In addition to $\eta = 0$, there may possibly be an additional non-trivial $(\eta \neq 0)$ solution to the equation $L_\mu (\eta)=1$, see Remark \ref{rem:LaplaceTransformEtas}.
%

	Relying on the Lau-Rao Theorem \ref{thm:Lau-RaoTheorem} since $f$ is monotone by assumption, every solution $f$ to the equation \eqref{eq:2} can be written as a convex combination $f = \lambda_1 f_1 + \lambda_2 f_2$ of two solutions of the form given by \eqref{eq:Lau-RauSolutionForm}. The rest of the proof is dedicated to showing that the only valid solutions to the probabilistic Cauchy equation \eqref{eq:PCE} are given by linear functions. 	Hence, assume that  $\eta_1 \neq 0$ and $\eta_2 = 0$ are the solutions of $L_{\mu} (\eta)=1$, and  consider  (for $\lambda_1 \in [0,1]$ and $\lambda_2 = 1- \lambda_1$)
		\begin{equation}\label{eq:SolutionForm1}
			f(x) = \lambda_1b(1-\ee^{-\eta_1 x}) + \lambda_2 c x,
	\end{equation}
	for some constants $b$ and $c$ that are chosen to ensure the monotonicity of $f$;  for instance, if $\eta <0$, then $b=1$ and $c=-1$ are permissible constants. (We exclude the the constants $a,d$ in \eqref{eq:Lau-RauSolutionForm} since they must equal $0$ for $f$ to satisfy in the probabilitistic Cauchy functional equation.) 
	
	To avoid the trivial cases, let $\lambda_1 \neq 0,1$. Let $\lambda := \lambda_2 c/( \lambda_1 b)$. Since $f \in \mathbfcal{PCFE}(\mu;n)$ and $f \in L^2(\mu)$ by our assumption,  the second moments of $f(\sum_{i=1}^n X_i)$ and $\sum_{i=1}^n f(X_i)$ exist and must be equal where $f$ is given by Eq.~\eqref{eq:SolutionForm1};  i.e.~we have that
	\begin{align}
		\E\Bigl( 1 - \exp(-\eta_1 \sum_{i = 1}^n X_i) + \lambda \sum_{i = 1}^n X_i) \Bigr)^2
		&= \E\Bigl( \sum_{i = 1}^n \bigl[1 - \exp(-\eta_1 X_i) \bigr] + \lambda \sum_{i = 1}^n   X_i \Bigr)^2.  \label{eq:MainSecondMomentIdentity(ii)}
	\end{align}
		Expanding the r.h.s.~of the above equation, we get
	\begin{align}
		&\E\Bigl( \sum_{i = 1}^n \bigl[1 - \exp(-\eta_1 X_i) \bigr] + \lambda \sum_{i = 1}^n   X_i \Bigr)^2  \notag \\
		&= \E\Bigl( \sum^{n}_{i=1} \bigl[1 - \exp(-\eta_1 X_i) \bigr] \Bigr)^2 + \lambda^2 \E\Bigl(  \sum_{i = 1}^n X_i \Bigr)^2 + 2\lambda \E \Bigl(\sum^{n}_{i=1}\bigl[ 1 - \exp(-\eta_1  X_i) \bigr] \sum_{i = 1}^n X_i  \Bigr), \notag \\
		&=   \E\Bigl( \sum^{n}_{i=1} \bigl[1 - \exp(-\eta_1 X_i) \bigr] \Bigr)^2 + \lambda^2 \E\Bigl(  \sum_{i = 1}^n X_i \Bigr)^2 + 2 \lambda \E \Bigl( \sum_{i = 1}^n\bigl[ 1 - \exp(-\eta_1 X_i) \bigr] X_i  \Bigr). \label{eq:RHSQuantity(ii)}
	\end{align}
	and the l.h.s.~yields
	\begin{align}
		&\E\Bigl(  1 - \exp(-\eta_1 \sum_{i = 1}^n X_i) + \lambda \sum_{i = 1}^n X_i) \Bigr)^2 \notag \\
		&= \E\Bigl( 1 - \exp(-\eta_1 \sum_{i = 1}^n X_i) \Bigr)^2 + \lambda^2 \E\Bigl(  \sum_{i = 1}^n X_i \Bigr)^2 + 2\lambda \E \Bigl(\bigl[ 1 - \exp(-\eta_1 \sum_{i = 1}^n X_i) \bigr] \sum_{i = 1}^n X_i  \Bigr), \label{eq:LHSQuantity(ii)}
	\end{align}
	However, by using that $L_\mu (\eta_1)=1$ and the independence assumption of $X_1, ...,X_n$, the last term of Eq.~\eqref{eq:LHSQuantity(ii)} becomes
	\begin{align}
		\E \Bigl(\bigl[ 1 - \exp(-\eta_1 \sum_{i = 1}^n X_i) \bigr] \sum_{i = 1}^n X_i  \Bigr) &= \sum_{i = 1}^n  \E \Bigl(  X_i \bigl[ 1 - \exp(-\eta_1 X_i) \bigr] \Bigr) \notag \\
		& \;\;\;\;\; + \sum_{i = 1}^n  \E \Bigl(  X_i \bigl[ 1 - \exp(-\eta_1 \sum_{j \neq i} X_j) \bigr] \Bigr) \notag \\
		&=   \E \Bigl( \sum_{i = 1}^n X_i \bigl[ 1 - \exp(-\eta_1 X_i) \bigr] \Bigr), \notag
	\end{align}
	which is the same as that in Eq.~\eqref{eq:RHSQuantity(ii)}.
	Therefore, by using the above identity and equating Eq.~\eqref{eq:LHSQuantity(ii)} and \eqref{eq:RHSQuantity(ii)}, we have that Eq.~\eqref{eq:MainSecondMomentIdentity(ii)} reduces to
	\begin{equation}
		 \E\Bigl(  1 - \exp(-\eta_1 \sum^{n}_{i=1} X_i) \Bigr)^2 =  \E\Bigl( \sum^{n}_{i=1} \bigl[1 - \exp(-\eta_1 X_i) \bigr] \Bigr)^2 .
	\end{equation}
	Since $L_\mu (\eta_1)=1$, we observe for the l.h.s.~of the above equation that
	\begin{align}
		\E\Bigl( 1 - \exp(-\eta_1 \sum_{i = 1}^n X_i) \Bigr)^2 &= 	\E\Bigl(  \exp(-2 \eta_1 X_1) \Bigr)^n - 1, \label{eq:LHSQuantityFinal}
	\end{align}
	and, by using independence of the $X_i$ to notice that the expectations of the cross-terms are zero, that the r.h.s.~yields
	\begin{align}
		\E\Bigl( \sum^{n}_{i=1} \bigl[1 - \exp(-\eta_1 X_i) \bigr] \Bigr)^2 &= \sum^{n}_{i=1} \E\Bigl(  \bigl[1 - \exp(-\eta_1 X_i) \bigr] \Bigr)^2 \notag \\
		&= n \Bigl( \E\Bigl( \exp(-2\eta_1 X_1) \Bigr) - 1 \Bigr). \label{eq:RHSQuantityFinal}
	\end{align}
	Now, we shall denote $x:= \E  \exp(-2 \eta_1 X_1) $, and thus we have by equating Eqs.~\eqref{eq:LHSQuantityFinal} and \eqref{eq:RHSQuantityFinal} that
	\begin{equation}
		(x^n - 1)  = n(x -1) , \notag
	\end{equation}
	which is equivalent to
	\begin{equation}
		(x-1)(\sum^{n-1}_{k=0}x^k - n) = 0. \label{eq:Contradiction!!}
	\end{equation}
	If $x < 1$, we have that $\sum^{n-1}_{k=0}x^k < n$, and if $x > 1$, then $\sum^{n-1}_{k=0}x^k > n$, and so \newline $(x-1)(\sum^{n-1}_{k=0}x^k - n) > 0$ if $x \neq 1$. Thus Eq.~\eqref{eq:Contradiction!!} yields that $x=1$. On the other hand, note that $x \neq 1$ yields a contradiction. Therefore, functions $f$ of the form \eqref{eq:SolutionForm1} cannot be a solution to the probabilistic Cauchy equation \eqref{eq:PCE}.

\end{proof}

\begin{rem}\label{rem:ClarifyingSymmetricAssumption}

In the proof of Theorem \ref{prop:GeneralCaseICFE-1}, when the support $S(\mu) = \R$ is the entire real line, following the Lau-Rao Theorem \ref{thm:Lau-RaoTheorem},  we had to treat solutions of the non-linear forms given by \eqref{eq:SolutionForm1} involving a non-trivial $(\eta \neq 0)$ constant $\eta$ so that $L_\mu (\eta) =1$.  However, all those non-linear solutions would be immediately eliminated as soon as measure $\mu$ is symmetric about the origin, i.e., $\mu (A) =\mu (-A)$ or equivalently $F(-x) = 1- F(x)$ where $F$ stands for the corresponding distribution function. In other words, the only solution to the equation $L_\mu (\eta)=1$ is given by the trivial solution $\eta=0$.  To see this,  Theorem 3c from pp.~239 of \cite{Widder1941}  guarantees that $L_{\mu}(\eta) =  \eta \int_{\R} e^{-\eta x} F (x) \dd x$ for $\eta \neq 0$, and provided that the Laplace transform $L_{\mu}(\eta)$ exists.	Hence, using the symmetry assumption, we have for $\eta > 0$ that
	\begin{align}
		\frac{L_{\mu}(\eta)}{\eta} &= \int_0^\infty e^{-\eta x} \bigl( F (x) - 1 + 1) \dd x + \int_{-\infty}^0 e^{-\eta x} F (x) \dd x \notag \\
		&=  -\int_0^\infty e^{-\eta x} \bigl(1 - F (x) \bigr) \dd x + \frac{1}{\eta} + \int^\infty_0 \ee^{\eta x} F(-x) \dd x
		\notag \\
		&= -\int_0^\infty e^{-\eta x} \bigl(1 - F (x) \bigr) \dd x + \frac{1}{\eta} + \int^\infty_0 \ee^{\eta x} \bigl( 1- F(x) \bigr) \dd x, \notag
	\end{align}
	which consequently yields that
	\begin{equation}
		\frac{L_{\mu}(\eta)-1}{\eta} = \int^\infty_0 \bigl(\ee^{\eta x} - \ee^{-\eta x} \bigr)\bigl( 1- F(x) \bigr) \dd x, \notag
	\end{equation}
	a contradiction  if $L_{\mu}(\eta) = 1$ for $\eta > 0$ (the case of $\eta < 0$ can be shown to yield a similar contradiction). A simple example fulfilling this setup is given by the centred Gaussian distribution $\mu = \mathcal{N}(0,\sigma^2)$ with $L_\mu(\eta) = e^{-\sigma^2 \eta^2 /2}$.
\end{rem}

\begin{rem}\label{rem:AssumptionH}
	For a given probability measure $\mu$, assumption $\textbf{(H)}$ can be viewed as a form of super(sub) linearity on average with respect to the measure $\mu$, which, in general, is much weaker than the classic pointwise super(sub) linearity. In other words, for a given function $f$ assumption $\textbf{(H)}$  is equivalent to
	
	\begin{equation}\label{eq:SubSupmu}
		\int_{S(\mu)} f(x+y) \mu (\dd y)  \ge (\le) \int_{S(\mu)} \left(f(x)+f(y) \right) \mu (\dd y), \quad \text{ for almost all $x$}.
	\end{equation}

\end{rem}


\begin{rem}\label{rem:ClarifyingDifferentConditions}
This remark aims to clarify the relationships between all the assumptions imposed on the solution function $f$ in the aforementioned results.

\medskip
	
(a) It is clear that any function satisfying in $f(x+y) \ge f(x)+f(y)$ satisfies also assumption  $\textbf{(H)}$ no matter the choice of the underlying probability measure $\mu$.  However, the converse is not necessarily true. Indeed, Lemma \ref{lem:CounterExamples} from the Appendix demonstrates a piecewise linear function $f$ satisfying  assumption $\textbf{(H)}$ w.r.t.~the exponential distribution but for which pointwise super-linearity fails for a set which is a non-trivial interval.

	\medskip
	
	(b) Let $\mu$ be an exponential distribution. The requirement that $f:[0,\infty) \mapsto [0,\infty)$ is increasing (decreasing) does not automatically imply the validity of assumption $\mathbf{(H)}$,  and hence both assumptions are required for Proposition \ref{prop:GeneralCaseICFE-1}. For example, consider the function $f(x) =e^{x/2}$ which is clearly (strictly) increasing but violates $\mathbf{(H)}$. Furthermore, Lemma \ref{lem:CounterExamples} shows that the converse does not hold either since a non-increasing function satisfying in assumption $\mathbf{(H)}$ exists by choosing $a=b=c=d=1$ and $r \geq 2e- 1$.
	
	\medskip
	
	(c) Note that for a given function $f:[0,\infty) \to [0,\infty)$ the condition $f(x+y) \geq f(x) + f(y), \; \forall x,y \in \R_+$ implies that $f$ is necessarily increasing but not strictly increasing (consider $f(x)=0$, for $x \in [0,1]$ and $f(x) = x-1$ for $x \ge 1$). Furthermore, the strictly increasing assumption does not imply super-linearity, i.e.,  $f(x+y) \geq f(x) + f(y), \; \forall x,y \in \R_+$. Indeed, consider $f(x) = 2x-\sin x$. Then, finding $x$ and $y$ s.t. $f(x+y) \leq f(x) + f(y)$ is the same as finding $x$ and $y$ s.t. $\sin(x+y) \geq \sin x + \sin y$, and so choosing $x,y \in (\pi, \tfrac{3}{2}\pi)$ is sufficient.
\end{rem}

\bibliographystyle{acm}
\bibliography{Refs_9_Noah}

\section{Appendix}

	\begin{lem}\label{lem:CounterExamples}
	Let $X$ be an exponentially distributed random variable with parameter $\lambda=1$. Then, there exist constants $a,b,c,d,r>0$ and a set $B \subseteq \R^2_+$ (having positive Lebesgue measure) such that the function
		\begin{equation}
			f(x) = \begin{cases}
				ax, & 0 \leq x \leq b \\
				-dx + (a+d)b, & b \leq x \leq b+c \\
				rx - (r+d)(b+c) + (a+d)b, & x \geq b+c \notag
			\end{cases}
		\end{equation}
satisfies
		\begin{equation}
			\E \bigl( f(x+X) - f(X) \bigr) \ge f(x), \quad \forall x \in \R_+, \notag
		\end{equation}
		but $f(x+y) \leq f(x) + f(y)$ for $(x,y) \in B$.
	\end{lem}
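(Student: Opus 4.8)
The plan is to exhibit a single admissible choice of the constants and verify the two assertions separately; it is cleanest to keep $a,b,c,d>0$ arbitrary but fixed and to let $r$ be large, so that in particular the normalized choice $a=b=c=d=1$, $r\ge 2e-1$ of the Remark is covered. First I would dispose of the easy half, the failure of superadditivity. Whenever $x,y$ lie on the first (increasing) segment, i.e.\ $x,y\in(0,b)$, while their sum falls on the middle (decreasing) segment, i.e.\ $b<x+y<b+c$, one has $f(x)+f(y)=a(x+y)$ and $f(x+y)=-d(x+y)+(a+d)b$; the desired inequality $f(x+y)\le f(x)+f(y)$ then reduces to $(a+d)b\le(a+d)(x+y)$, which holds because $x+y>b$. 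Hence the set $B:=\{(x,y)\in(0,b)^2 : b<x+y<b+c\}$ works, and it plainly has positive Lebesgue measure.

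Next I would verify assumption $\mathbf{(H)}$ by reducing it to a monotonicity statement. Set $\phi(x):=\E\bigl(f(x+X)-f(X)\bigr)-f(x)$, so that the claim is $\phi\ge 0$ on $\R_+$. Writing $H(x):=\E f(x+X)=\ee^{x}\int_x^\infty f(s)\ee^{-s}\,\dd s$ and using the continuity of $f$, one obtains the differential identity $H'(x)=H(x)-f(x)$; since $\E f(X)=H(0)$ and $f(0)=0$, this gives $\phi(x)=H(x)-f(x)-H(0)$ with $\phi(0)=0$ and $\phi'(x)=H(x)-f(x)-f'(x)$ at every point where $f$ is differentiable.

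The crucial point is that the exponential weight always reaches into the terminal ray of $f$, where the slope equals $r$. For $x<b+c$, the slope-$r$ part of $f$ on $[b+c,\infty)$ contributes the $x$-independent amount $r\,\ee^{-(b+c)}$ to $\int_x^\infty f(s)\ee^{-s}\,\dd s$, so the coefficient of $r$ in $H(x)$ is exactly $\ee^{x-(b+c)}$, whereas $f(x)$ and $f'(x)$ carry no $r$ on $[0,b+c)$. Therefore on $[0,b+c)$ one can write $\phi'(x)=\psi(x)+r\,\ee^{x-(b+c)}$, where $\psi$ is a fixed continuous ($r$-free) function. Because $\ee^{x-(b+c)}\ge\ee^{-(b+c)}>0$ on the compact interval $[0,b+c]$ and $\psi$ is bounded there, choosing $r$ so large that $r\,\ee^{-(b+c)}>\sup_{[0,b+c]}|\psi|$ forces $\phi'>0$; together with $\phi(0)=0$ this yields $\phi\ge 0$ on $[0,b+c]$. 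For $x\ge b+c$ the map $t\mapsto f(x+t)$ is affine with slope $r$, so $\phi(x)=r\,\E X-\E f(X)=r-\E f(X)$ is constant and equals $\phi(b+c)\ge 0$ by continuity; moreover, since the coefficient of $r$ in $\E f(X)$ is $\ee^{-(b+c)}<1$, this constant is genuinely nonnegative for large $r$.

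The main obstacle is to see that the local dip created by the decreasing middle segment cannot push $\phi$ below zero: pointwise one really does lose superadditivity there, yet after averaging against the exponential tail the steep slope $r$ always enters with a strictly positive and, on $[0,b+c]$, uniformly bounded-below weight. Isolating this $r$-dependence cleanly and arguing through the sign of $\phi'$, rather than trying to bound $\phi$ directly (which would demand delicate control near $x=0$, where both $\phi$ and its $r$-weight vanish), is what makes the estimate go through. The only routine work left is to confirm that $\psi$ is indeed bounded on $[0,b+c]$ (immediate, being an explicit combination of affine and exponential terms on a compact interval) and to record an explicit threshold for $r$, recovering $r\ge 2e-1$ in the normalized case $a=b=c=d=1$.
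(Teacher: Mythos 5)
Your proof is correct, and its skeleton is the same as the paper's: both halves are handled the same way at the top level. For the failure of superadditivity, you take $B=\{(x,y)\in(0,b)^2: b<x+y<b+c\}$ while the paper takes $B=(0,\tfrac{b}{2N})\times(b,b+\tfrac{b}{2N})$ with $b/N<c$; both are open sets of positive measure on which the sum lands on the decreasing segment, and both verifications are one-line. For assumption $\mathbf{(H)}$, both you and the paper set $\phi(x)=\E\bigl(f(x+X)-f(X)\bigr)-f(x)$, use $\phi(0)=0$, and reduce to $\phi'\ge 0$. The difference is in how that sign is obtained. The paper computes $\phi'(x)=\int_0^\infty f'(x+y)e^{-y}\,\dd y-f'(x)$ explicitly on each of the three pieces, which yields the sharp condition $c\le\log\bigl(\tfrac{r+d}{a+d}\bigr)$ (and trivially nonnegative expressions on the other two pieces); this sharp threshold is exactly what the paper's Remark \ref{rem:ClarifyingDifferentConditions}(b) uses to justify the concrete choice $a=b=c=d=1$, $r\ge 2e-1$. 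You instead use the ODE identity $H'(x)=H(x)-f(x)$ for $H(x)=\E f(x+X)$, isolate the coefficient of $r$ in $\phi'$ on $[0,b+c)$ as $e^{x-(b+c)}$, and finish with a soft large-$r$ argument, plus the observation that $\phi$ is constant on $[b+c,\infty)$. This is perfectly adequate for the lemma as stated (it only asserts existence of parameters), and isolating the $r$-dependence is a clean structural insight; what it costs you is sharpness: your sufficient condition $r\,e^{-(b+c)}>\sup_{[0,b+c]}|\psi|$ is more conservative than the paper's, so your closing claim that you would ``recover $r\ge 2e-1$'' in the normalized case is not actually delivered by your bound --- for that you would need the paper's explicit piecewise computation. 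Two further cosmetic points: $\psi$ is not continuous on $[0,b+c]$ (it jumps at $x=b$ where $f'$ jumps from $a$ to $-d$), only piecewise continuous and bounded, which is all your argument needs; and since $\phi'$ fails to exist at $x=b$ and $x=b+c$, you should say $\phi$ is continuous and piecewise $C^1$ with $\phi'\ge 0$ off finitely many points, hence nondecreasing.
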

	
	\begin{proof}
		Observe that there exists $N \in \N$ s.t. $b/N < c$.  This implies that any $x \in (0,\tfrac{b}{2N})$ and $y \in (b, b + \tfrac{b}{2N})$ satisfies $f(x+y) \leq f(x) + f(y)$ (take $B = (0,\tfrac{b}{2N}) \times (b, b + \tfrac{b}{2N})$).
		Now, let $\boldsymbol{\theta} := (a,b,c,d,r)$ and $H(x, \boldsymbol{\theta}) := \E \bigl( f(x+X) - f(X) \bigr) - f(x)$. Since $f(0) = 0$, we necessarily have that $H(0,\boldsymbol{\theta}) = 0$; and thus showing that $H^\prime(x,\boldsymbol{\theta}) \geq 0$ holds for some $a,b,c,d,r >0$ is sufficient. Therefore, we have
		\begin{equation}
			H^\prime(x,\boldsymbol{\theta}) = \int^\infty_0  f^\prime(x+y) e^{-y} \dd y - f^\prime (x), \notag
		\end{equation}
		where
		\begin{equation}
			f^\prime(x) = \begin{cases}
				a, & 0 < x < b \\
				-d, & b < x < b+c \\
				r & x > b+c. \notag
			\end{cases}
		\end{equation}
It is now required to find the range of the parameters for which $H^\prime(x,\boldsymbol{\theta}) \geq 0$ is satisfied. First, for $x \in [0,b]$,
		\begin{align}
			H^\prime(x,\boldsymbol{\theta}) &= \int^{b-x}_0  a e^{-y} \dd y - \int_{b-x}^{b-x+c}  d e^{-y} \dd y + \int_{b-x+c}^\infty  r e^{-y} \dd y - a \notag \\
			&= (r+d) e^{-(b-x+c)} - (a+d) e^{-(b-x)}, \notag
		\end{align}
		and hence $H^\prime(x,\boldsymbol{\theta}) \geq 0$ yields that
		\begin{equation}
			c \leq \log (\tfrac{r+d}{a+d}). \label{eq:Lemma-condition-on-c}
		\end{equation}
		Performing similar calculations for $x \in [b,b+c]$ and $x \in [b+c,\infty)$ yields no further information about the parameters. Then, since there always exists some $N \in \N$ such that $b/N < c$ and $a,d,r >0$ s.t.~$c \leq \log (\tfrac{r+d}{a+d})$, these parameters satisfy the conditions of the lemma.
	\end{proof}
	
	

\end{document}